\numberwithin{equation}{section}
\theoremstyle{plain}
\newtheorem{theorem}{Theorem}[section]
\theoremstyle{definition}
\newtheorem{definition}[theorem]{Definition}
\theoremstyle{remark}
\newtheorem*{remark}{Remark}
\begin{document}
%

\newcommand{\MgNekp}{\mathcal{M}_{g,N+1}^{(k,p)}} 
\newcommand{\M}{\mathcal{M}_{g,N+1}^{(1)}}
\newcommand{\Teich}{\mathcal{T}_{g,N+1}^{(1)}}
\newcommand{\T}{\mathrm{T}}
\newcommand{\corr}{\bf}
\newcommand{\vac}{|0\rangle}
\newcommand{\Ga}{\Gamma}
\newcommand{\new}{\bf}
\newcommand{\define}{\def}
\newcommand{\redefine}{\def}
\newcommand{\Cal}[1]{\mathcal{#1}}
\renewcommand{\frak}[1]{\mathfrak{{#1}}}
\newcommand{\refE}[1]{(\ref{E:#1})}
\newcommand{\refS}[1]{Section~\ref{S:#1}}
\newcommand{\refSS}[1]{Section~\ref{SS:#1}}
\newcommand{\refT}[1]{Theorem~\ref{T:#1}}
\newcommand{\refO}[1]{Observation~\ref{O:#1}}
\newcommand{\refP}[1]{Proposition~\ref{P:#1}}
\newcommand{\refD}[1]{Definition~\ref{D:#1}}
\newcommand{\refC}[1]{Corollary~\ref{C:#1}}
\newcommand{\refL}[1]{Lemma~\ref{L:#1}}
\newcommand{\R}{\ensuremath{\mathbb{R}}}
\newcommand{\C}{\ensuremath{\mathbb{C}}}
\newcommand{\N}{\ensuremath{\mathbb{N}}}
\newcommand{\Q}{\ensuremath{\mathbb{Q}}}
\renewcommand{\P}{\ensuremath{\mathcal{P}}}
\newcommand{\Z}{\ensuremath{\mathbb{Z}}}
\newcommand{\kv}{{k^{\vee}}}
\renewcommand{\l}{\lambda}
\newcommand{\gb}{\overline{\mathfrak{g}}}
\newcommand{\hb}{\overline{\mathfrak{h}}}
\newcommand{\g}{\mathfrak{g}}
\newcommand{\h}{\mathfrak{h}}
\newcommand{\gh}{\widehat{\mathfrak{g}}}
\newcommand{\ghN}{\widehat{\mathfrak{g}_{(N)}}}
\newcommand{\gbN}{\overline{\mathfrak{g}_{(N)}}}
\newcommand{\tr}{\mathrm{tr}}
\newcommand{\sln}{\mathfrak{sl}}
\newcommand{\sn}{\mathfrak{s}}
\newcommand{\so}{\mathfrak{so}}
\newcommand{\spn}{\mathfrak{sp}}
\newcommand{\tsp}{\mathfrak{tsp}(2n)}
\newcommand{\gl}{\mathfrak{gl}}
\newcommand{\slnb}{{\overline{\mathfrak{sl}}}}
\newcommand{\snb}{{\overline{\mathfrak{s}}}}
\newcommand{\sob}{{\overline{\mathfrak{so}}}}
\newcommand{\spnb}{{\overline{\mathfrak{sp}}}}
\newcommand{\glb}{{\overline{\mathfrak{gl}}}}
\newcommand{\Hwft}{\mathcal{H}_{F,\tau}}
\newcommand{\Hwftm}{\mathcal{H}_{F,\tau}^{(m)}}

\newcommand{\car}{{\mathfrak{h}}}    
\newcommand{\bor}{{\mathfrak{b}}}    
\newcommand{\nil}{{\mathfrak{n}}}    
\newcommand{\vp}{{\varphi}}
\newcommand{\bh}{\widehat{\mathfrak{b}}}  
\newcommand{\bb}{\overline{\mathfrak{b}}}  
\newcommand{\Vh}{\widehat{\mathcal V}}
\newcommand{\KZ}{Kniz\-hnik-Zamo\-lod\-chi\-kov}
\newcommand{\TUY}{Tsuchia, Ueno  and Yamada}
\newcommand{\KN} {Kri\-che\-ver-Novi\-kov}
\newcommand{\pN}{\ensuremath{(P_1,P_2,\ldots,P_N)}}
\newcommand{\xN}{\ensuremath{(\xi_1,\xi_2,\ldots,\xi_N)}}
\newcommand{\lN}{\ensuremath{(\lambda_1,\lambda_2,\ldots,\lambda_N)}}
\newcommand{\iN}{\ensuremath{1,\ldots, N}}
\newcommand{\iNf}{\ensuremath{1,\ldots, N,\infty}}

\newcommand{\tb}{\tilde \beta}
\newcommand{\tk}{\tilde \kappa}
\newcommand{\ka}{\kappa}
\renewcommand{\k}{\kappa}

\newcommand{\Pif} {P_{\infty}}
\newcommand{\Pinf} {P_{\infty}}
\newcommand{\PN}{\ensuremath{\{P_1,P_2,\ldots,P_N\}}}
\newcommand{\PNi}{\ensuremath{\{P_1,P_2,\ldots,P_N,P_\infty\}}}
\newcommand{\Fln}[1][n]{F_{#1}^\lambda}
\newcommand{\tang}{\mathrm{T}}
\newcommand{\Kl}[1][\lambda]{\can^{#1}}
\newcommand{\A}{\mathcal{A}}
\newcommand{\U}{\mathcal{U}}
\newcommand{\V}{\mathcal{V}}
\renewcommand{\O}{\mathcal{O}}
\newcommand{\Ae}{\widehat{\mathcal{A}}}
\newcommand{\Ah}{\widehat{\mathcal{A}}}
\newcommand{\La}{\mathcal{L}}
\newcommand{\Le}{\widehat{\mathcal{L}}}
\newcommand{\Lh}{\widehat{\mathcal{L}}}
\newcommand{\eh}{\widehat{e}}
\newcommand{\Da}{\mathcal{D}}
\newcommand{\kndual}[2]{\langle #1,#2\rangle}
\newcommand{\cins}{\frac 1{2\pi\mathrm{i}}\int_{C_S}}
\newcommand{\cinsl}{\frac 1{24\pi\mathrm{i}}\int_{C_S}}
\newcommand{\cinc}[1]{\frac 1{2\pi\mathrm{i}}\int_{#1}}
\newcommand{\cintl}[1]{\frac 1{24\pi\mathrm{i}}\int_{#1 }}
\newcommand{\w}{\omega}
\newcommand{\ord}{\operatorname{ord}}
\newcommand{\res}{\operatorname{res}}
\newcommand{\nord}[1]{:\mkern-5mu{#1}\mkern-5mu:}
\newcommand{\ad}{\operatorname{ad}}
\newcommand{\codim}{\operatorname{codim}}
\newcommand{\Fn}[1][\lambda]{\mathcal{F}^{#1}}
\newcommand{\Fl}[1][\lambda]{\mathcal{F}^{#1}}
\renewcommand{\Re}{\mathrm{Re}}

\newcommand{\ha}{H^\alpha}

\define\ldot{\hskip 1pt.\hskip 1pt}
\define\ifft{\qquad\text{if and only if}\qquad}
\define\a{\alpha}
\redefine\d{\delta}
\define\w{\omega}
\define\ep{\epsilon}
\redefine\b{\beta} \redefine\t{\tau} \redefine\i{{\,\mathrm{i}}\,}
\define\ga{\gamma}
\define\cint #1{\frac 1{2\pi\i}\int_{C_{#1}}}
\define\cintta{\frac 1{2\pi\i}\int_{C_{\tau}}}
\define\cintt{\frac 1{2\pi\i}\oint_{C}}
\define\cinttp{\frac 1{2\pi\i}\int_{C_{\tau'}}}
\define\cinto{\frac 1{2\pi\i}\int_{C_{0}}}
\define\cinttt{\frac 1{24\pi\i}\int_C}
\define\cintd{\frac 1{(2\pi \i)^2}\iint\limits_{C_{\tau}\,C_{\tau'}}}
\define\cintdr{\frac 1{(2\pi \i)^3}\int_{C_{\tau}}\int_{C_{\tau'}}
\int_{C_{\tau''}}}
\define\im{\operatorname{Im}}
\define\re{\operatorname{Re}}
\define\res{\operatorname{res}}
\redefine\deg{\operatornamewithlimits{deg}}
\define\ord{\operatorname{ord}}
\define\rank{\operatorname{rank}}
\define\fpz{\frac {d }{dz}}
\define\dzl{\,{dz}^\l}
\define\pfz#1{\frac {d#1}{dz}}

\define\K{\Cal K}
\define\U{\Cal U}
\redefine\O{\Cal O}
\define\He{\text{\rm H}^1}
\redefine\H{{\mathrm{H}}}
\define\Ho{\text{\rm H}^0}
\define\A{\Cal A}
\define\Do{\Cal D^{1}}
\define\Dh{\widehat{\mathcal{D}}^{1}}
\redefine\L{\Cal L}
\newcommand{\ND}{\ensuremath{\mathcal{N}^D}}
\redefine\D{\Cal D^{1}}
\define\KN {Kri\-che\-ver-Novi\-kov}
\define\Pif {{P_{\infty}}}
\define\Uif {{U_{\infty}}}
\define\Uifs {{U_{\infty}^*}}
\define\KM {Kac-Moody}
\define\Fln{\Cal F^\lambda_n}
\define\gb{\overline{\mathfrak{ g}}}
\define\G{\overline{\mathfrak{ g}}}
\define\Gb{\overline{\mathfrak{ g}}}
\redefine\g{\mathfrak{ g}}
\define\Gh{\widehat{\mathfrak{ g}}}
\define\gh{\widehat{\mathfrak{ g}}}
\define\Ah{\widehat{\Cal A}}
\define\Lh{\widehat{\Cal L}}
\define\Ugh{\Cal U(\Gh)}
\define\Xh{\hat X}
\define\Tld{...}
\define\iN{i=1,\ldots,N}
\define\iNi{i=1,\ldots,N,\infty}
\define\pN{p=1,\ldots,N}
\define\pNi{p=1,\ldots,N,\infty}
\define\de{\delta}

\define\kndual#1#2{\langle #1,#2\rangle}
\define \nord #1{:\mkern-5mu{#1}\mkern-5mu:}
\define \sinf{{\widehat{\sigma}}_\infty}
\define\Wt{\widetilde{W}}
\define\St{\widetilde{S}}
\newcommand{\SigmaT}{\widetilde{\Sigma}}
\newcommand{\hT}{\widetilde{\frak h}}
\define\Wn{W^{(1)}}
\define\Wtn{\widetilde{W}^{(1)}}
\define\btn{\tilde b^{(1)}}
\define\bt{\tilde b}
\define\bn{b^{(1)}}
%
\define\eps{\varepsilon}    
\define\doint{({\frac 1{2\pi\i}})^2\oint\limits _{C_0}
       \oint\limits _{C_0}}                            
\define\noint{ {\frac 1{2\pi\i}} \oint}   
\define \fh{{\frak h}}     
\define \fg{{\frak g}}     
\define \GKN{{\Cal G}}   
\define \gaff{{\hat\frak g}}   
\define\V{\Cal V}
\define \ms{{\Cal M}_{g,N}} 
\define \mse{{\Cal M}_{g,N+1}} 
\define \tOmega{\Tilde\Omega}
\define \tw{\Tilde\omega}
\define \hw{\hat\omega}
\define \s{\sigma}
\define \car{{\frak h}}    
\define \bor{{\frak b}}    
\define \nil{{\frak n}}    
\define \vp{{\varphi}}
\define\bh{\widehat{\frak b}}  
\define\bb{\overline{\frak b}}  
\define\Vh{\widehat V}
\define\KZ{Knizhnik-Zamolodchikov}
\define\ai{{\alpha(i)}}
\define\ak{{\alpha(k)}}
\define\aj{{\alpha(j)}}
\newcommand{\laxgl}{\overline{\mathfrak{gl}}}
\newcommand{\laxsl}{\overline{\mathfrak{sl}}}
\newcommand{\laxso}{\overline{\mathfrak{so}}}
\newcommand{\laxsp}{\overline{\mathfrak{sp}}}
\newcommand{\laxs}{\overline{\mathfrak{s}}}
\newcommand{\laxg}{\overline{\frak g}}
\newcommand{\bgl}{\laxgl(n)}
\newcommand{\tX}{\widetilde{X}}
\newcommand{\tY}{\widetilde{Y}}
\newcommand{\tZ}{\widetilde{Z}}

\vspace*{-1cm}
%
%
%
\vspace*{2cm}

\large{
\title[Lax operator algebras and gradings]
{Lax operator algebras and gradings on semi-simple Lie algebras}
\author[O.K. Sheinman]{Oleg K. Sheinman}
\thanks{The work of O.K.Sheinman has been supported in part
by the program "Fundamental Problems of Nonlinear Dynamics" of the
Russian Academy of Sciences, and by the RFBR projects 14-01-00012-a, 13-01-12469 ofi-m2}

\begin{abstract}
A Lax operator algebra is constructed for an arbitrary semi-simple Lie algebra over $\C$ equipped with a $\Z$-grading, and arbitrary compact Riemann surface with marked points. In this set-up, a treatment of almost graded structures, and classification of the central extensions of Lax operator algebras are given. A relation to the earlier approach based on the Tyurin parameters is established.
\end{abstract} \subjclass{17B66,
17B67, 14H10, 14H15, 14H55,  30F30, 81R10, 81T40} \keywords{
Current algebra, Lax operator algebra, graded semi-simple Lie algebra}
\maketitle

\tableofcontents

\section{Introduction}\label{S:intro}

Lax operator algebras have been introduced in \cite{KSlax} in connection with the notion of Lax operator with a spectral parameter on a Riemann surface due to I.Krichever \cite{Klax}. They constitute a certain class of current algebras on Riemann surfaces closely related to finite-dimensional integrable systems like Hitchin systems (including their version for
pointed Riemann surfaces), integrable gyroscopes and integrable cases of two-dimensional hydrodynamics of a solid body. The formalism of Lax operator algebras provides a general and transparent treatment (inherent in principle in \cite{Klax}) of Hamiltonian theory of the corresponding Lax equations.

In certain respects Lax operator algebras are similar to Kac--Moody algebras. They possess an almost graded structure, and a theory of central extensions very similar to that of their prototype, though technically more complicated. The state of the theory of Lax operator algebras and their applications actual to the end of 2013 has been summarized in \cite{Sh_DGr}.

So far Lax operator algebras have been constructed only for classical simple (and some reductive) Lie algebras over $\C$ (serving as the range of values of currents), and for the exceptional Lie algebra $G_2$ \cite{KSlax,Sh_DGr,Sh_G2}, in terms of their matrix models. The proofs of their properties were specific for every type of reductive Lie algebra, and quite technically involved, especially for $\spn(2n)$ and $G_2$. The question whether there exists any general approach based on the theory of root systems was open though had been posed many times by the author (see \cite{Sh_G2} for example).

An important ingredient, the definition of Lax operator algebras is based on, is given by Tyurin parameters. These are the data classifying holomorphic vector bundles on compact complex Riemann surfaces by the theorem due to A.N.Tyurin \cite{Tyvb}. These data consist of a set of marked points on the Riemann surface (called \emph{Tyurin points}, or \emph{$\ga$-points} below), and a set of associated with them elements of a projective space. A local part of the definition of Lax operator algebras postulates a certain form, and properties of Laurent expansions of their elements at the Tyurin points.

Recently E.B.Vinberg has drawn the author's attention to the fact that the local conditions at the Tyurin points can be formulated in terms of an arbitrary semi-simple Lie algebra, and its $\Z$-grading. Based on this observation, we define here Lax operator algebras for an arbitrary semi-simple Lie algebra. A detailed description of graded structures on semi-simple Lie algebras in terms of their root systems, and their matrix or tensor models as well, is given in \cite{Vin}. To a great extent, this description is based on the works of E.B.V. and his collaborators.

The new approach unifies and crucially simplifies all proofs, and enables us to construct new Lax operator algebras. For the classic Lie algebras, and for $G_2$, the Tyurin parameters, or their analogs, arise in the new approach automatically.

The two main results of the paper are represented by Theorems \ref{T:almgrad}, and \ref{T:central} formulated and proved in the \refS{constr}, and \refS{cexst}, respectively. The first of them gives a description of the Lie algebra structure, and almost graded structures on Lax operator algebras, while the second gives a construction and classification of their central extensions. In the \refS{Tyupar}, we consider a number of examples, including all classical root systems, and $G_2$, mainly in order to state a correspondence with the earlier approach, in particular, show an appearance of Tyurin parameters.

The author is grateful to E.B.Vinberg for illuminating discussions.

\section{The current algebra and its almost graded structures}\label{S:constr}
Let $\g$ be a semi-simple Lie algebra over $\C$, $\h$ its Cartan subalgebra, $h\in\h$ such that $p_i=\a_i(h)\in\Z_+$ for any simple root $\a_i$ of $\g$. Let $\g_p=\{ X\in\g\ |\ (\ad h)X=pX \}$, and $k=\max\{p\ |\ \g_p\ne 0\}$. Then $\g=\bigoplus\limits_{i=-k}^{k}\g_p$ gives a $\Z$-grading on $\g$. For the theory and classification results on such kind of gradings we refer to \cite{Vin}. We call $k$ the depth of the grading. Obviously, $\g_p=\bigoplus\limits_{\substack{\a\in R\\ \a(h)=p}}\g_\a$ where $R$ is the root system of $\g$. Define also the following filtration of $\g$:  $\tilde\g_p=\bigoplus\limits_{q=-k}^m\g_q$. Then, $\tilde\g_p\subset\tilde\g_{p+1}$ ($p\ge -k$), $\tilde\g_{-k}=\g_{-k},\ldots,\tilde\g_k=\g$, $\tilde\g_p=\g$, $p>k$.

Let $\Sigma$ be a complex compact Riemann surface with two fixed finite sets of marked points: $\Pi$, and
$\Gamma$. Let $L$ be a meromorphic map $\Sigma\to\g$ which is holomorphic outside the marked points, may have poles of arbitrary orders at the points in $\Pi$, and has expansions of the following form at the points in $\Gamma$:
\begin{equation}\label{E:ga_expan}
   L(z)=\bigoplus\limits_{p\ge -k} L_pz^p,\ L_p\in\tilde\g_p
\end{equation}
where $z$ is a local coordinate in a neighborhood of $\ga\in\Gamma$. The grading element $h$ may vary from one $\ga$ to another. For simplicity we assume that $k$ is constant at all $\ga\in\Gamma$ though nothing would change below if we did not assume that.

Let us denote the linear space of all such maps by $\L$. Since the relation \refE{ga_expan} holds true under commutator, $\L$ is a Lie algebra. We fix this important fact as assertion $1^\circ$ of \refT{almgrad} below. The Lie agebra $\L$, its almost-graded structure, and its central extensions are the main subjects of the present paper.  Sometimes we will use the notation $\gb$ instead $\L$ in order to stress its relation to a given semi-simple algebra $\g$. We also keep the name {\it Lax operator algebras} for this class of current algebras in order to emphasize their succession to those in \cite{KSlax,Sh_DGr}.
\begin{definition}
 Given a Lie algebra $\L$, by an {\it almost graded structure} on it we mean a system of its finite-dimensional subspaces $\L_m$, and two non-negative integers $R$, $S$ such that $\L =\bigoplus\limits_{m=-\infty}^\infty \L_m$, and $[\L_m,\L_n]\subseteq\bigoplus\limits_{r=m+n-R}^{m+n+S}\L_r$ ($R$, $S$ are independent of $m$, $n$).
\end{definition}
The almost graded structure on associative and Lie algebras has been introduced by I.M.Krichever and S.P.Novikov  in \cite{KNFa}. For the Lax operator algebras and  the two point case it has been investigated in \cite{KSlax}. Most general setup for both Krichever--Novikov and Lax operator algebras has been considered by M.Schlichenmaier \cite{SLa,SLb,Sch_Laxmulti}.

The above introduced $\L$ possesses a number of almost graded structures. To define one, let us give a splitting of $\Pi$ to a join of two subsets: $\Pi=\{ P_i\ |\ i=1,\ldots,N \}\cup \{ Q_j\ |\ j=1,\ldots,M \}$. Following the lines of \cite{SLa,SLb,Sch_Laxmulti} for every $m\in\Z$ consider three divisors:
\begin{equation}\label{E:3div}
 D^P_m=-m\sum\limits_{i=1}^NP_i,\ D^Q_m=\sum_{j=1}^M(a_jm+b_{m,j})\,Q_j,\ D^\Gamma=k\sum_{\ga\in\Gamma}\ga
\end{equation}
where $a_j,b_{m,j}\in\Q$, $a_j>0$, $a_jm+b_{m,j}$ is an ascending $\Z$\ -valued function of $m$, and there exists a $B\in\R_+$ such that
$|b_{m,j}|\le B, \forall m\in\Z, j=1,\ldots,M$\label{bjmbound}.
We require that
\begin{equation}\label{E:condd}
\sum_{j=1}^Ma_j=N,\qquad
\sum_{i=j}^Mb_{m,j}=N+g-1.
\end{equation}
Let
\begin{equation}\label{E:Dm}
  D_m=D_m^P+D_m^Q+D^\Gamma,
\end{equation}
and
\begin{equation}\label{E:homos}
   \L_m=\{L\in\L\ |(L)+D_m\ge 0\},
\end{equation}
where $(L)$ is the divisor of a $\g$-valued function $L$.
To be more specific of $(L)$, let us notice that by order of a
meromorphic vector-valued function we mean the minimal order of
its entries.

We call $\L_m$ {\it the (homogeneous, grading) subspace of degree
$m$}\index{Homogeneous element (subspace)} of the Lie algebra
$\L$.
\begin{theorem}\label{T:almgrad}
\begin{itemize}
  \item[]
  \item[$1^\circ $] $\L$ is closed with respect to the point-wise commutator $[L,L'](P)=[L(P),L'(P)]$ $(P\in\Sigma)$.
  \item[$2^\circ$] $\dim\,\L_m=N\dim\,\g$;
  \item[$3^\circ$] $\L =\bigoplus\limits_{m=-\infty}^\infty \L_m$ ;
  \item[$4^\circ$] $[\L_m,\L_n]\subseteq\bigoplus\limits_{r=m+n}^{m+n+ g}\L_r$.
\end{itemize}
\end{theorem}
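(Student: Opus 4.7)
The plan is to address the four parts in order. Part $1^\circ$ follows directly from the $\Z$-grading on $\g$; parts $2^\circ$ and $4^\circ$ rest on Riemann--Roch; part $3^\circ$ mimics the graded-basis construction of \cite{SLa,SLb,Sch_Laxmulti}.

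For $1^\circ$, the key observation is that $[\g_a,\g_b]\subseteq\g_{a+b}$ together with $\g_c=0$ for $|c|>k$. Expanding $L=\sum_{p\ge -k}L_p z^p$, $L'=\sum_{q\ge -k}L'_q z^q$ with $L_p\in\tilde\g_p$, $L'_q\in\tilde\g_q$, and writing each $L_p$ as a sum of homogeneous parts $(L_p)_a\in\g_a$ with $a\le p$, every bracket $[(L_p)_a,(L'_q)_b]$ lies in $\g_{a+b}$ with $a+b\le p+q$. For $p+q<-k$ this vanishes, so no Laurent terms below $z^{-k}$ are produced; for $p+q=m\ge -k$ the sum lies in $\tilde\g_m$. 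Since poles at the $P_i$ are unrestricted and are preserved by the bracket, $[L,L']\in\L$.

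For $2^\circ$, from \refE{condd} I compute $\deg D_m=-mN+(mN+(N+g-1))+k|\Gamma|=N+g-1+k|\Gamma|$, independent of $m$. Applying Riemann--Roch entry by entry in a basis of $\g$ (with non-speciality ensured for generic parameters) yields $\dim\g\cdot(N+k|\Gamma|)$ meromorphic $\g$-valued functions with $(L)+D_m\ge 0$. The grading condition $L_p\in\tilde\g_p$ at a given $\ga$ then imposes
\[
 \sum_{p=-k}^{k-1}\codim_\g\tilde\g_p=\sum_{q=-k+1}^{k}(q+k)\dim\g_q=k\dim\g
\]
independent linear constraints, where the last equality uses the symmetry $\dim\g_q=\dim\g_{-q}$ (a general feature of $\Z$-gradings on a semi-simple Lie algebra, e.g.\ via the Killing form or an $\sln_2$-triple containing $h$). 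The total codimension $k|\Gamma|\dim\g$ leaves $\dim\L_m=N\dim\g$.

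For $3^\circ$, I construct a basis $\{L^{(m),i,a}\}$ of $\L$ indexed by $m\in\Z$, $i\in\{1,\dots,N\}$, and a basis index $a$ of $\g$, where $L^{(m),i,a}\in\L_m$ has prescribed leading Laurent term at $P_i$, vanishes to higher order at each $P_{i'}$ with $i'\ne i$, and attains the maximal pole order at each $Q_j$ permitted by $D_m^Q$. Existence at every level $m$ follows from $2^\circ$; linear independence across $m$ and termination of the expansion of an arbitrary $L\in\L$ follow from the strict monotonicity of $a_jm+b_{m,j}$ in $m$ together with the uniform bound $|b_{m,j}|\le B$, which separate the homogeneous components at the $Q_j$. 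For $4^\circ$, take $L\in\L_m$, $L'\in\L_n$: part $1^\circ$ gives $[L,L']\in\L$, and the local orders satisfy $\ord_{P_i}[L,L']\ge m+n$, $\ord_\ga[L,L']\ge -k$, $\ord_{Q_j}[L,L']\ge -(a_j(m+n)+b_{m,j}+b_{n,j})$. Writing $[L,L']=\sum_r[L,L']^{(r)}$ by $3^\circ$, the leading $P_i$-behaviour of the basis separates the degrees and forces $r\ge m+n$. For the upper bound, the excess pole at $Q_j$ in a putative contribution to $\L_{m+n+s}$ equals $b_{m,j}+b_{n,j}-b_{m+n+s,j}$; combining the uniform bound on $b$ with the Riemann--Roch genus-$g$ defect (which may require up to $g$ extra zeros to realize a prescribed pole configuration) bounds $s$ by $g$, giving $r\le m+n+g$.

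The main obstacles I expect are in $3^\circ$, where careful bookkeeping of the $b_{m,j}$ parameters at the $Q_j$ is required to establish linear independence across all degrees simultaneously, and in the tight bound $S=g$ in $4^\circ$, whose appearance reflects the classical \KN\ genus defect in Riemann--Roch rather than anything specific to the grading on $\g$. Parts $1^\circ$ and $2^\circ$ are essentially direct grading and divisor computations.
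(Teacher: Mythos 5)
Your proposal follows the paper's proof essentially verbatim: part $2^\circ$ --- the only part the paper proves in detail --- is handled by the same computation $\deg D_m=N+g-1+k|\Gamma|$, the same (generic, non-special) Riemann--Roch count, and the same codimension count $c_\ga=k\dim\g$ at the $\ga$-points, your summation $\sum_q(q+k)\dim\g_q=k\dim\g$ being just a re-indexing of the paper's pairing $\codim_{\,\g}\tilde\g_p+\codim_{\,\g}\tilde\g_{-p-1}=\dim\g$, both resting on the symmetry $\dim\g_q=\dim\g_{-q}$. For $1^\circ$ you supply the filtration argument $[\tilde\g_p,\tilde\g_q]\subseteq\tilde\g_{p+q}$ that the paper declares obvious, and for $3^\circ$--$4^\circ$ you sketch the standard Krichever--Novikov basis construction that the paper simply cites from \cite{Sch_Laxmulti}, so the approaches coincide throughout.
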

\begin{proof}
A proof of the assertion  $1^\circ $ is obvious as it was already noticed. For the proof of the assertions $3^\circ$, $4^\circ$ we refer to \cite{Sch_Laxmulti} (where they are given for classical Lie algebras but acually hold true in our present set-up). The proof of the assertion $2^\circ$ is the only specific in our set-up, and will be given here.

Let $L(D_m)=\{L\ |\ (L)+D_m\ge 0\}$ where $L:\Sigma\to\g$ is meromorphic but the requirement $L\in\L$ has been relaxed. Let $l_m=\dim L(D_m)$. For the points in $\Pi$, $\Gamma$ in a generic position $l_m$ is given by the Riemann--Roch theorem:
\[
  l_m=(\dim\g)(\deg\, D_m-g+1).
\]
Observe that
\[
  \deg D_m=-mN+m\sum_{i=1}^Ma_i+\sum_{i=1}^Mb_{m,i}+k|\Gamma|
\]
where $|\Gamma|$ denotes the number of elements in $\Gamma$.
By \refE{condd} we obtain
$
  \deg D_m=N+g-1+k|\Gamma|
$.
Hence
\[
 l_m=(\dim\g)(N+k|\Gamma|).
\]

The $\L_m$ is the subspace in $L(D_m)$ distinguished by the conditions $L_p\in\tilde\g_p$ ($p=-k,\ldots,k$) in \refE{ga_expan} given all over $\ga\in\Gamma$. At every $\ga\in\Gamma$, the codimension of the expansions of the form \refE{ga_expan}, given at $\ga$, in the space of all $\g$-valued power series in $z$ starting from $z^{-k}$ can be computed as
$c_\ga=\sum\limits_{p=-k}^{k-1} \codim_{\,\g}\,\tilde\g_p$ (starting from $p=k$ we have $\codim_{\,\g}\tilde\g_p=0$). The grading $\g=\bigoplus\limits_{p=-k}^k\g_p$ is symmetric in a sense that $\dim\g_p=\dim\g_{-p}$ \cite{Vin} (moreover, $\g_p$ and $\g_{-p}$ are contragredient as $\g_0$-modules). By definition $\codim_{\,\g}\tilde\g_p=\sum\limits_{q=p+1}^k\dim\g_q$, by symmetry $\sum\limits_{q=p+1}^k\dim\g_q=\dim\tilde\g_{-p-1}$, hence $\codim_{\,\g}\,\tilde\g_p+\codim_{\,\g}\tilde\g_{-p-1}=\dim\g$. Obviously, $c_\ga=\sum\limits_{p=-k}^{-1} (\codim_{\,\g}\,\tilde\g_p+\codim_{\,\g}\tilde\g_{-p-1})$.    Hence $c_\ga=k\dim\g$.

Further on, we have $\codim_{L(D_m)}\L_m=\sum\limits_{\ga\in\Gamma} c_\ga=k(\dim\g)|\Gamma|$. Finally\footnote{The idea of the following calculation can be traced back to \cite{KN_2point} (where it was given with no regard to the Lie algebra theory) via \cite{Sch_Laxmulti,Sh_DGr,KSlax}}
\[
 \dim\L_m=l_m-k(\dim\g)|\Gamma|=N\dim\g .
\]
\end{proof}



\section{Central extensions}
\label{S:cexst}
In this section we construct the almost graded central extensions of $\L$.  We call a central extension almost graded if it inherits the almost graded structure from the original Lie algebra while central elements are relegated to the degree $0$ subspace.

Almost graded central extensions are given by local cocycles.
Let us recall from \cite{KNFa,KSlax,SSlax,Sh_DGr,Sch_Laxmulti} that a two-cocycle $\eta$ on $\L$ is called local if $\exists M\in\Z_+$ such that for any $m,n\in\Z$, $|m+n|>M$, and any $L\in\L_m$, $L'\in\L_n$ we have $\eta(L,L')=0$.  Our main goal in this section is the following theorem.
\begin{theorem}\label{T:central}
\begin{itemize}
  \item[]
  \item[$1^\circ $] For any $L,L'\in\L$ the 1-form $\langle L,(d - \ad\w)L'\rangle$ is holomorphic except at the $P$- and $Q$-points where $\w$ is a $\g_0$-valued 1-form on $\Sigma$ having the expansion of the form
      \[
          \w(z)=\left(\frac{h}{z}+\w_0+\ldots\right)dz
      \]
      at any $\ga$-point, where $h\in\h$ is the element giving the grading on $\g$ at the point $\ga$.
  \item[$2^\circ$] For any invariant quadratic form $\langle\cdot , \cdot\rangle$ on $\g$ \[\eta(L,L')=\sum\limits_{i=1}^N\res_{P_i}\langle L,(d - \ad\w)L'\rangle\] gives a local cocycle on $\L$.
  \item[$3^\circ$] Up to equivalence, the almost-graded central extensions of $\L$ are in a one-to-one correspondence with the invariant quadratic forms on~$\g$. In particular, if $\g$ is simple then the central extension given by the cocycle $\eta$ is unique (in the class of the almost graded central extensions) up to equivalence and rescaling the central element.
\end{itemize}
\end{theorem}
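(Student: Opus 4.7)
The plan is to treat the three assertions in order, using invariance of $\langle\cdot,\cdot\rangle$ together with the $\Z$-grading as the essential ingredients.

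For $1^\circ$, only the $\ga$-points need checking, since everything is obviously holomorphic on $\Sigma\setminus(\Pi\cup\Gamma)$. Expanding $L=\sum_{p\ge-k}L_pz^p$ and $L'=\sum_{q\ge-k}L'_qz^q$ with $L_p,L'_q\in\tilde\g_p,\tilde\g_q$, and $\w=(h/z+\w_0+\w_1z+\cdots)\,dz$ with $\w_n\in\g_0$ for $n\ge 0$, the coefficient of $z^{s-1}\,dz$ in $\langle L,(d-\ad\w)L'\rangle$ reads
\[
\sum_{a+b=s}b\,\langle L_a,L'_b\rangle\;-\!\!\sum_{\substack{a+n+p=s-1\\ n\ge -1}}\!\!\langle L_a,[\w_n,L'_p]\rangle.
\]
Invariance of the form under $\ad h$ forces $\langle\g_r,\g_{r'}\rangle=0$ unless $r+r'=0$; together with $\tilde\g_p=\bigoplus_{r\le p}\g_r$ this gives $\langle\tilde\g_a,\tilde\g_b\rangle=0$ whenever $a+b<0$, so both sums vanish termwise when $s<0$. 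At $s=0$, only the $n=-1$ (i.e.\ $\w_{-1}=h$) summands survive in the second sum; picking out the top components via $[h,L_{a,a}]=a\,L_{a,a}$ and using the same pairing argument shows they exactly cancel the $s=0$ contribution of the first sum. Existence of a global $\g_0$-valued $\w$ with the prescribed $\ga$-expansion is a standard residue-theorem exercise: the $\h$-valued total residue $\sum_\ga h_\ga$ is absorbed using the freedom at the $P$- and $Q$-points.

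For $2^\circ$, set $\nabla=d-\ad\w$. Since $\w$ is $\g$-valued, Jacobi for $\g$ implies $\nabla$ is a derivation of the pointwise bracket, and invariance of $\langle\cdot,\cdot\rangle$ gives
\[
\langle L,\nabla L'\rangle+\langle L',\nabla L\rangle=d\langle L,L'\rangle,
\]
so $\eta$ is antisymmetric after taking residues. The cocycle identity reduces, via invariance and the derivation property of $\nabla$, to showing that $\sum_{\mathrm{cyc}}\langle L_1,[L_2,\nabla L_3]\rangle$ collapses into an exact 1-form (using Jacobi in $\g$ on the pointwise values), whose residues vanish. Locality is then immediate from $1^\circ$: the residue theorem applied to the meromorphic 1-form $\langle L,\nabla L'\rangle$ yields $\sum_{i}\res_{P_i}=-\sum_{j}\res_{Q_j}$, and for $L\in\L_m$, $L'\in\L_n$ the pole orders at $Q_j$ grow at most linearly in $m,n$ with slope $a_j$, so by the constraint $\sum a_j=N$ in \refE{condd} the $Q$-sum vanishes once $|m+n|$ exceeds a constant depending on the $b_{m,j}$, the order of $\w$, and $g$.

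For $3^\circ$, I would follow the scheme developed in \cite{KNFa,KSlax,SSlax,Sch_Laxmulti}. Starting from an arbitrary almost-graded local 2-cocycle $\psi$, the degree grading decomposes $\psi$ into homogeneous pieces $\psi_h$, and locality forces only finitely many $h$ to contribute; a coboundary adjustment absorbs all but the $h=0$ part. Restricting the residual degree-zero cocycle to Laurent coefficients at a $P$-point yields a bilinear form $b$ on $\g$, and the cocycle identity for $\psi$ (combined with $[\g,\g]=\g$) forces $b$ to be $\g$-invariant. For simple $\g$ the space of invariant symmetric forms is one-dimensional (spanned by the Killing form), giving uniqueness up to rescaling; the semisimple case follows by decomposing into simple summands.

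The main obstacle is $3^\circ$: the reduction of an arbitrary almost-graded local cocycle to the canonical form $\eta_b$ requires careful cohomological bookkeeping combining the degree decomposition, pole-order estimates at $\ga$- and $Q$-points, and vanishing of certain finite-dimensional cohomologies. Assertions $1^\circ$ and $2^\circ$, by contrast, are formal consequences of invariance once one notices that $\w$'s residue lies in $\h\subset\g_0$ and that the top-grade pairing is precisely the obstruction that has to cancel at each $\ga$-point.
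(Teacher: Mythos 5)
Your proposal is correct and follows essentially the same route as the paper: the Laurent-coefficient cancellation at the $\ga$-points via the orthogonality $\langle\g_r,\g_{r'}\rangle=0$ for $r+r'\ne 0$ in $1^\circ$, the residue-theorem plus pole-order (divisor) estimates at the $P$- and $Q$-points for locality in $2^\circ$ (where the relevant hypothesis is $a_j>0$ and the boundedness of the $b_{m,j}$ rather than $\sum a_j=N$), and deferral to the standard cohomological scheme of the cited references for $3^\circ$. The only additions beyond the paper's text are your explicit verification of the cocycle identity, which the paper treats as standard, and the existence of the global $\g_0$-valued form $\w$, which neither text fully proves.
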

\begin{proof}[Proof of \refT{central}{[\,$1^\circ$]}]
In a neighborhood of a $\ga$-point let
\[
   L=\sum_{p\ge -k} L_pz^p, \quad L'=\sum_{q\ge -k} L'_qz^q, \quad L_p\in{\tilde\g_p},\ L_q\in{\tilde\g_q}
\]
where $\tilde\g_p\subset\g$ is a filtration subspace, i.e. $\tilde\g_p=\bigoplus\limits_{s\le p}\g_s$, $\g_s$ being the grading subspaces. Then
\[
  dL'=\sum_{q\ge -k} qL'_qz^{q-1}\, dz,
\]
and in a neighborhood of the $\ga$-point
\begin{equation}\label{E:stand}
   \langle L,dL'\rangle=\sum_{p,q\ge -k} q\langle L_pL'_q\rangle z^{p+q-1}\, dz.
\end{equation}
Observe that $(\ad h)L_p=pL_p+\tilde L_{p-1}$ where $\tilde L_{p-1}\in\tilde\g_{p-1}$. For this reason
\[
\begin{split}
  (\ad\w)L' &=\ad(hz^{-1}+\w_0+\ldots)\sum_{q\ge -k} L'_qz^q\, dz= \\
  &=\sum_{q\ge -k}(qL'_q+\tilde L_{q-1})z^{q-1}\, dz +\sum_{\substack{q\ge -k\\l\ge 0\hphantom{ii}}} L_{q,l}z^{q+l}\, dz
\end{split}
\]
where $L_{q,l}=(\ad\w_l)L_q'\in\tilde\g_q$ (since $\w_l\in\g_0$). The terms $\tilde L_{q-1}z^{q-1}$ in the first sum have the same form as terms in the second sum (with $l=0$). By abuse of notation, we just omit them in the relation, regarding to them as to having been transferred to the second sum:
\begin{equation*}
 (\ad\w)L' = \sum_{q\ge -k} qL'_qz^{q-1}\, dz +\sum_{\substack{q\ge -k\\ l\ge 0\hphantom{ii}}} L_{q,l}z^{q+l}\, dz .
\end{equation*}
Hence
\begin{equation}\label{E:cobound}
 \langle L,(\ad\w)L'\rangle = \sum_{p,q\ge -k} q\langle L_p,L'_q\rangle z^{p+q-1}\, dz +\sum_{\substack{p,q\ge -k\\ l\ge 0\hphantom{i,ii}}}\langle L_p, L_{q,l}\rangle z^{p+q+l}\, dz ,
\end{equation}
Subtracting \refE{cobound} from \refE{stand} we obtain
\begin{equation}
 \langle L,(d-\ad\w)L'\rangle = -\sum_{\substack{p,q\ge -k\\ l\ge 0\hphantom{i,ii}}}\langle L_p, L_{q,l}\rangle z^{p+q+l}\, dz .
\end{equation}
Let us show that the last expression is holomorphic in the neighborhood of $z=0$. Assume, $p+q+l < 0$. Since $l\ge 0$, we obtain $p+q < 0$. By definition, $L_p\in\bigoplus\limits_{i\le p}\g_i$, $L_{q,l}\in\bigoplus\limits_{j\le q}\g_j$. Here, $i+j\le p+q<0$, hence $\langle \g_i,\g_j\rangle = 0$. It follows that $\langle L_p,L_{q,l}\rangle =0$.
\end{proof}
\begin{proof}[Proof  of {\refT{central}[\,$2^\circ$]}]
This part of the proof is nowadays standard \cite{KSlax,SSlax,Sh_DGr,Sh_G2}. We give it here for completeness.

As earlier, let $D_m$ be as follows:
\begin{equation}\label{E:Dm}
  D_m=-m\sum\limits_{i=1}^NP_i+\sum_{j=1}^M(a_jm+b_{m,j})\,Q_i+k\sum_{s=1}^K\ga_s
\end{equation}
Let the brackets $(\cdot)$ denote the divisor of what is closed inside them if the last is a function or a 1-form. Let $L\in\L_m$, $L'\in\L_{m'}$. Then
\[
  (\langle L,dL'\rangle)\ge (m+m'-1)\sum\limits_{i=1}^NP_i-\sum_{j=1}^M(a_j(m+m')+b_{m,j}+b_{m',j}-1)\,Q_j+D_\ga
\]
where $D_\ga$ is a certain divisor supported at $\ga$-points.

Assume that $(\w)\ge \sum\limits_{i=1}^Nm_i^+P_i-\sum_{j=1}^Mm_j^-Q_j-\sum_{s=1}^K\ga_s$. Then
\[
  (\langle L,(\ad\w)L'\rangle )\ge \sum\limits_{i=1}^N(m+m'+m_i^+)P_i-\sum_{j=1}^M(a_j(m+m')+b_{m,j}+b_{m',j}+m_j^-)\,Q_j+D'_\ga .
\]
where, by \refT{central}[\,$1^\circ$], $D_\ga-D'_\ga\ge 0$.

In order the 1-form $\rho=\langle L,(d-\ad\w)L'\rangle$ had a nontrivial residue at least at one of the points $P_i$ it is necessary that
\[
  \min_{i=1,\ldots,N} \{ m+m'-1,m+m'+m_i^+\}\le -1,
\]
in other words,
\begin{equation}\label{E:upperb}
  m+m'\le -1-\min_{i=1,\ldots,N} \{ -1,m_i^+\}.
\end{equation}

It is necessary also that $\sum\limits_{i=1}^N \res_{P_i}\rho\ne 0$. But then $\sum\limits_{j=1}^M \res_{Q_j}\rho\ne 0$ too, hence $\rho$ has a nontrivial residue at least at one of the points $Q_j$. For the last,  it is necessary that
\[
  \max_{j=1,\ldots,M} \{a_j(m+m')+b_{m,j}+b_{m',j}-1 ,a_j(m+m')+b_{m,j}+b_{m',j}+m_j^-\}\ge 1.
\]
Since $b_{j,m}\le B, \forall j,m$ (see page \pageref{bjmbound}) the last inequality implies that
\[
  \max_{j=1,\ldots,M} \{a_j(m+m')+2B-1 ,a_j(m+m')+2B+m_j^-\}\ge 1,
\]
further on
\[
  \max_{j=1,\ldots,M} \{a_j(m+m')+2B-1 ,a_j(m+m')+2B+\max_{j=1,\ldots,M}m_j^-\}\ge 1,
\]
then
\[
  \max_{j=1,\ldots,M} \{a_j(m+m')\}\ge 1-\max\{  2B-1\, ,\, 2B+\! \max_{j=1,\ldots,M}m_j^- \},
\]
and finally
\begin{equation}\label{E:lowerb}
   m+m' \ge \min_{j=1,\ldots,M}\{ a_j^{-1}(1-\max\{  2B-1\, ,\, 2B+\! \max_{j=1,\ldots,M}m_j^- \})\}.
\end{equation}
By \refE{upperb} and \refE{lowerb} we conclude that $\eta(L,L')$ is a local cocycle.
\end{proof}

\begin{remark}
Since $\ad\w$ is an inner derivation of $\g$, the part $\langle L,(\ad\w)L'\rangle$ of the cocycle $\eta$ is a coboundary. It can be explicitly represented as a linear function of the commutator $[L,L']$. Indeed, $\langle L,(\ad\w)L'\rangle=\tr (\ad L)(\ad[\w,L'])$. Making use of the relation $\ad[\w,L']=\ad\w\cdot\ad L'-\ad L'\cdot\ad\w$, and the cyclic property of the $\tr$ operator, we obtain $\langle L,(\ad\w)L'\rangle=-\tr(\ad[L,L']\cdot\ad\w)$. Thus, \refT{central}[\,$2^\circ$] can be reformulated as follows: the standard cocycle $\langle L,dL'\rangle$ is local up to a coboundary.
\end{remark}

\begin{proof}[Proof of \refT{central}{[\,$3^\circ$]}] For the proof of uniqueness in the case when $\g$ is simple we refer to \cite{SSlax} where this proof had been given for an arbitrary simple Lie algebra $\g$, and the 2-point case ($N=1$), and to \cite{Sch_Laxmulti} where it is given for arbitrary sets of $P$- and $Q$-points. The remainder of the statement $3^\circ$ of the Theorem easily follows from this result.
\end{proof}

\section{Relation to Tyurin parameters}\label{S:Tyupar}
In this section we consider gradings of the depth 1 and 2 on the classical Lie algebras, and of the depth 2 and 3 for $G_2$. In those cases we reproduce Tyurin parameters, and expansions of Lax operators earlier obtained in \cite{Klax,KSlax,Sh_DGr,Sh_G2}. We also point out the relevant new Lax operator algebras. We restrict ourselves with the gradings given by simple roots. For a simple root $\a_i$ such a grading is given by $h\in\h$ such that $\a_i(h)=1$, and $\a_j(h)=0$ ($j\ne i$). Hence the grading subspace $\g_p$ is a direct sum of the root subspaces $\g_\a$ such that $\a_i$ is contained in the expansion of $\a$ over simple roots with multiplicity $p$. Observe that the depth of grading given by a simple root is equal to the coefficient with which this root is contained in the expansion of the highest root. Below, we keep the following convention: $\g_i\subset\g$ is a subspace with the eigenvalue $(-i)$. At the pictures below the longest lines correspond to the medians of the corresponding matrices. We refer to \cite[Sect.2,\ \S 3.5]{Vin} for the detailed information on $\Z$-gradings of semisimple Lie algebras. Below, $e_1,\ldots,e_n$ denote the elements of a base in the space the root system is embedded in, orthonormal with respect to the Cartan--Killing form.

\subsection{The case of $A_n$}\label{S:An}

In this case $\g$ has $\left[\frac{n}{2}\right]$ gradings of depth 1 (and no gradings of depth 2 given by simple roots). The grading number $r$ ($1\le r\le\left[\frac{n}{2}\right]$) is given by assigning a coefficient $1$ to the simple root $\a_r=e_r-e_{r+1}$. Consider first the grading number $1$.
Then the block structure
\begin{figure}[h]           
\begin{picture}(0,0)
\put(165,57){\text{--}\ $\g_0$}
\put(165,74){\text{--}\ $\g_{-1}$}
\put(165,38){\text{--}\ $\g_{1}$}
\put(67,-10){\text{a}}
\end{picture}
  \includegraphics[width=6cm]{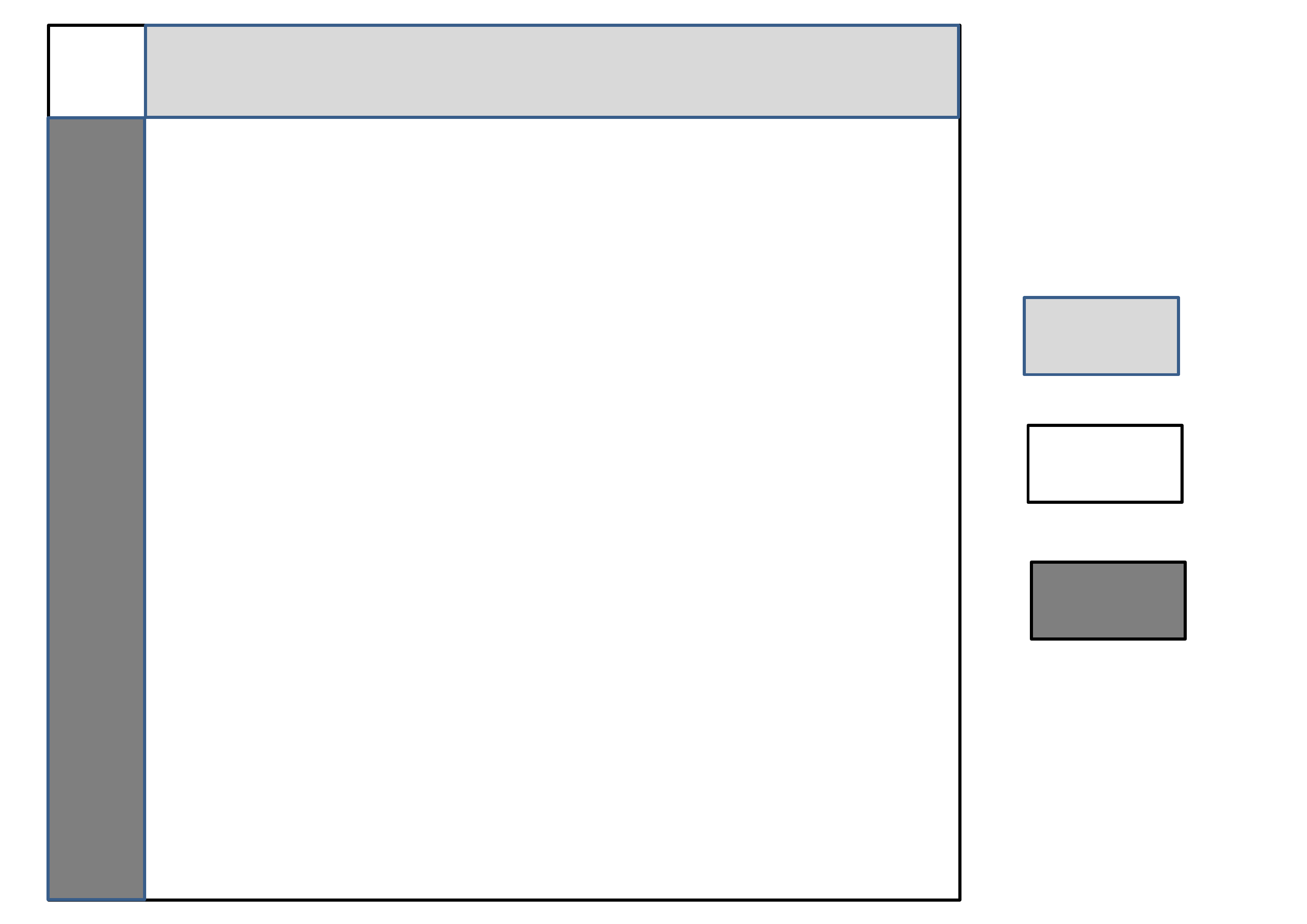}
\begin{picture}(30,0)
\end{picture}
\includegraphics[width=6cm]{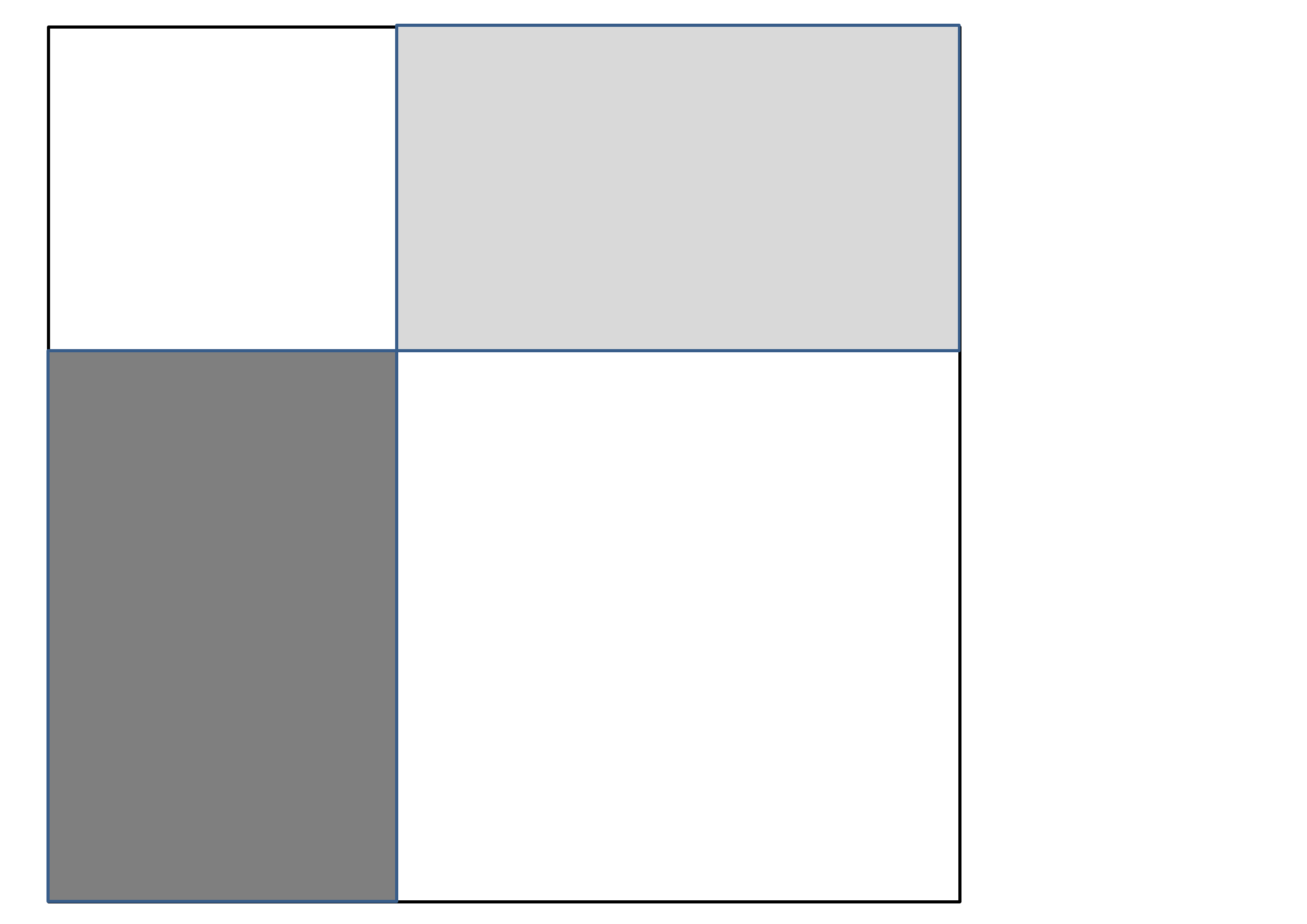}
\begin{picture}(0,0)
\put(-112,-10){\text{b}}
\end{picture}
  \caption{Case of $A_n$}\label{An}
\end{figure}
of the grading subspaces is as at the figure \ref{An},a. The matrices corresponding to the subspace $\g_{-1}=\tilde\g_{-1}$ can be represented as $\a\b^t$ where $\a$ is fixed by $\a^t=(1,0,\ldots,0)$, $\b\in\C^n$ is arbitrary.
Such matrix belongs to $\sln(n)$ if $\b^t\a=0$. Further on, an element $L_0\in\g$ belongs to the filtration subspace $\tilde\g_0=\g_{-1}\oplus\g_0$ if, and only if $\a$ is its eigenvector.
Thus we arrive to the following expansion of a Lax operator for $\sln(n)$ at a $\ga$-point \cite{Klax} (see also \cite{KSlax,Sh_DGr}):
\[
   L(z)=\a\b^tz^{-1}+L_0+\ldots
\]
where $\b^t\a=0$, and $L_0\a=\varkappa\a$ (for some $\varkappa\in\C$).

The grading number $r$ is given by the simple root $\a_r=e_r-e_{r+1}$. The corresponding matrix realization is presented at the figure \ref{An},b. This root is contained in the expansions of the roots $e_i-e_j$ for $i=1,\dots,r$, $j=r+1\ldots,n$. The sum of the corresponding root spaces gives the grading subspace $\g_{-1}$. Thus the upper $\g_0$-block has dimension $r\times r$, the $\g_{-1}$ and $\g_1$ blocks have dimensions $r\times (n-r)$, and $(n-r)\times r$, respectively. The subspace $\g_{-1}$ consists of the matrices of the form $\tilde\a_1\b_1^t+\ldots + \tilde\a_r\b_r^t$ where $\tilde\a_i=(0,\ldots,0,1,0,\ldots,0)$ ($1$ at the $i$-th position). $\tilde\a_i$ and $\b_j$ are mutually orthogonal, the linear span of $\tilde\a_1,\ldots,\tilde\a_r$ is an invariant subspace for $\tilde\g_0$.

\subsection{The case of $D_n$}\label{S:Dn}
The Dynkin diagram  $D_n$ is as follows:

\begin{figure}[h]  
\begin{picture}(100,45)
\put(-40,10){
\begin{picture}(100,30)
\put(0,10){\circle*{3}}
\put(0,10){\line(1,0){30}}
\put(30,10){\circle*{3}}
\put(30,10){\line(1,0){30}}
\put(60,10){\circle*{3}}
\put(60,10){\line(1,0){15}}
\put(100,10){\line(1,0){15}}
\put(115,10){\circle*{3}}
\put(115,10){\line(1,0){30}}
\put(145,10){\circle*{3}}
\put(145,10){\line(1,1){20}}
\put(165,30){\circle*{3}}
\put(145,10){\line(1,-1){20}}
\put(165,-10){\circle*{3}}
\put(80,9){$\ldots$}
\put(-5,0){$\a_1$}
\put(25,0){$\a_2$}
\put(150,7){$\a_{n-2}$}
\put(170,27){$\a_{n-1}$}
\put(170,-13){$\a_n$}
\end{picture}   }
\end{picture}
\end{figure}
where
\[
  \a_1=e_1-e_2,\ \ldots, \ \a_{n-1}=e_{n-1}-e_n,\ \a_n=e_{n-1}+e_n,
\]
and all positive roots are $e_i\pm e_j$, $1\le i<j\le n$.
Below we need expressions of the positive roots via simple ones. These are as follows:
\[
  e_i-e_j=\a_i+\ldots+\a_{j-1}\quad (1\le i<j\le n),
\]
and
\[
e_i+e_j=\left\{
   \begin{array}{ll}
            \a_i+\ldots+\a_{j-1}+2\a_j+\ldots+2\a_{n-2}+\a_{n-1}+\a_n, & \hbox{$i<j\le n-2$;} \\
            \a_i+\ldots+\a_{n-1}+\a_n, & \hbox{$i<j= n-1$;} \\
            \a_i+\ldots+\a_{n-2}+\a_n, & \hbox{$i\le n-2, j=n$;} \\
            \a_n, & \hbox{$i=n-1, j=n$.}
   \end{array}
        \right.
\]
The highest root is $\theta=\a_1+2\a_2+\ldots+2\a_{n-2}+\a_{n-1}+\a_n$, hence there are three simple roots $\a_1$, $\a_{n-1}$, and $\a_n$ giving gradings of depth $1$, but the last two are equivalent under an outer automorphism.

Consider the grading corresponding to the simple root $\a_1$. This simple root is contained in the expansions for $e_1\pm e_j$, $j=2,\ldots,n$. The corresponding root subspaces constitute the grading subspace $\g_{-1}$. In general, the blocks corresponding to the grading subspaces in the matrix realization of $\g=\so(2n)$ with respect to the quadratic form $\s=\begin{pmatrix}
          0 & E \\
          E & 0 \\
    \end{pmatrix}
$ are represented at figure \ref{Dn},a:
\begin{figure}[h]           
\begin{picture}(0,0)
\put(165,57){\text{--}\ $\g_0$}
\put(165,74){\text{--}\ $\g_{-1}$}
\put(165,38){\text{--}\ $\g_{1}$}
\put(67,-10){\text{a}}
\end{picture}
  \includegraphics[width=6cm]{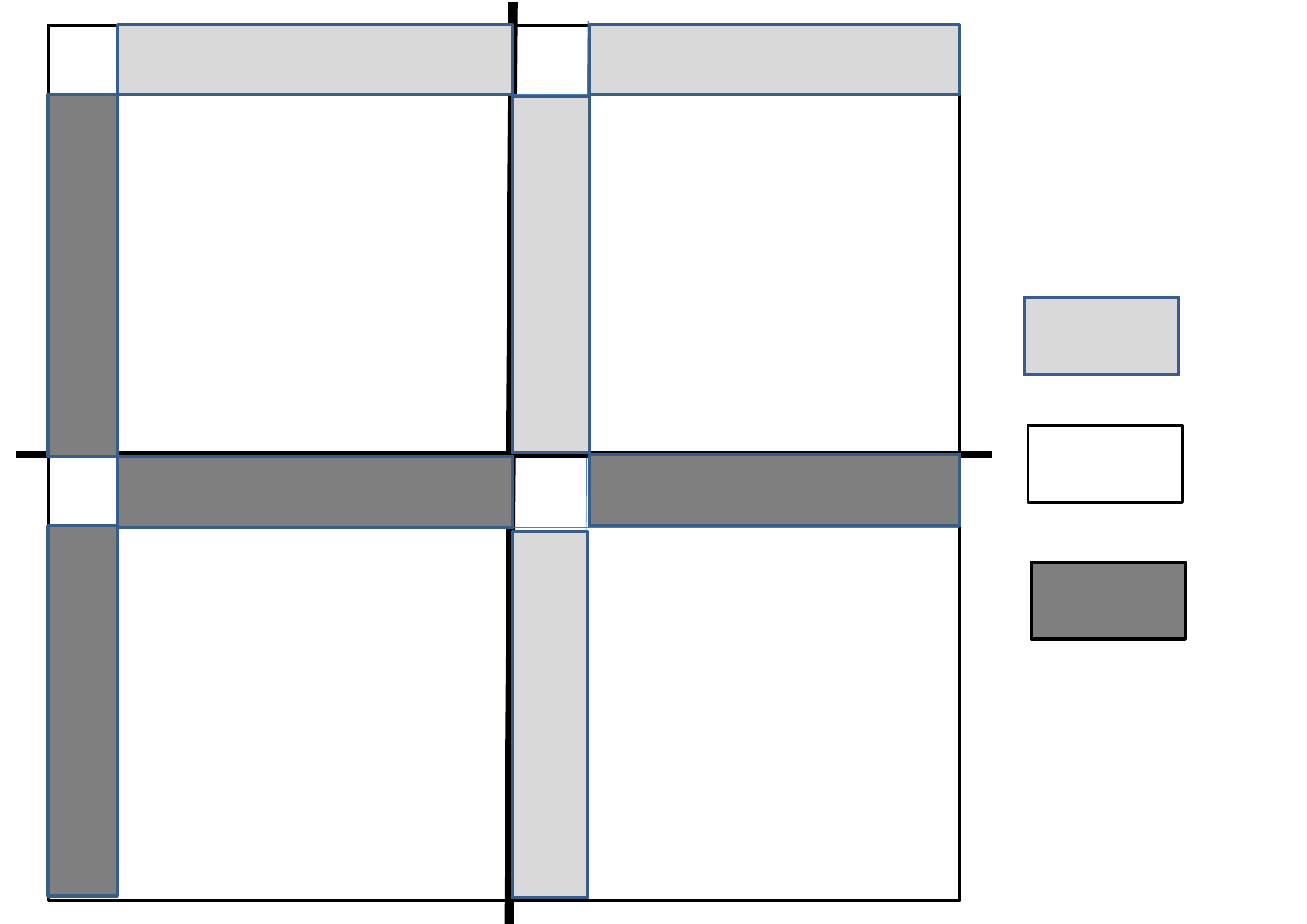}
\begin{picture}(30,0)
\put(-167,53){\small $0$}
\put(-105,109){\small $0$}
\end{picture}
\includegraphics[width=6cm]{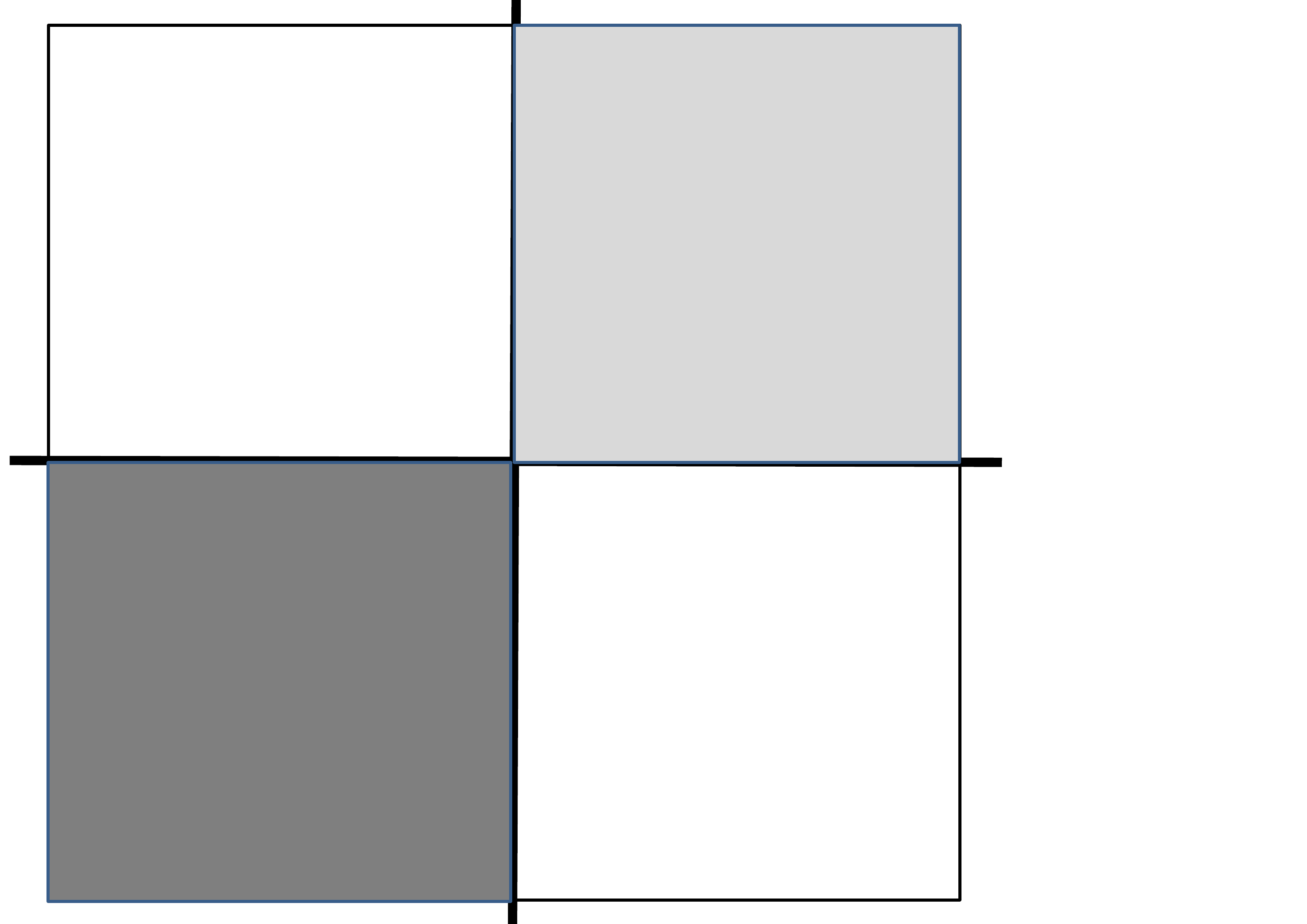}
\begin{picture}(0,0)
\put(-112,-10){\text{b}}
\end{picture}
  \caption{Case of $D_n$}\label{Dn}
\end{figure}

Observe that $\g_{-1}$ can be represented as the subspace of rank 2 matrices of the form $(\a\b^t-\b\a^t)\s$ where $\a,\b\in\C^{2n}$, $\a^t=(1,0,\ldots,0)$, and $\b^t\s\a=0$. Observe also that $\a$ is an eigenvector with respect to $\g_0$, and $\a^t\s\a=0$. Hence we obtain the following expansion (first found in \cite{KSlax}) for $L$ at a $\ga$-point :
\[
   L(z)=(\a\b^t-\b\a^t)\s z^{-1}+L_0+\ldots
\]
where $\a$, $\b$ and $L_0$ satisfy the above relations.

Considering now the grading given by the simple root $\a_n$ we see that it is contained in the expansions of the positive roots $e_i+e_j$ for all $i<j$. The remainder of simple roots forms the Dynkin diagram $A_{n-1}$. The blocks corresponding to the grading subspaces are represented at figure \ref{Dn},b. In particular, $\g_{-1}$ is represented by the matrices of the form $(\a_1\b_1^t-\b_1\a_1^t)\s+\ldots+(\a_n\b_n^t-\b_n\a_n^t)\s$ where $\a_i,\b_i\in\C^{2n}$, for $i=1,\ldots,n$ $\a_i$ is given by its coordinates $\a_i^j=\d_i^j$ (where $j=1,\ldots,2n$, $\d_i^j$ is the Kronecker symbol), $\b_i=(\b_i^1,\ldots,\b_i^n,0,\ldots,0)$ (where $\b_i^j\in\C$ are arbitrary). Observe that $\a_i^t\s\b_j=\a_i^t\s\a_j=\b_i^t\s\b_j=0$ for all $i,j=1,\ldots,n$. Using these relations we can independently show by methods of \cite{KSlax} that the property of $L$ to have simple poles at $\ga$-points is conserved under commutator.
\subsection{The case of $C_n$}\label{S:Cn}
The Dynkin diagram  $C_n$ is as follows:
\begin{figure}[h]
\begin{picture}(100,45)
\put(-40,10){
\begin{picture}(100,30)
\put(0,10){\circle*{3}}
\put(0,10){\line(1,0){30}}
\put(30,10){\circle*{3}}
\put(30,10){\line(1,0){30}}
\put(60,10){\circle*{3}}
\put(60,10){\line(1,0){15}}
\put(100,10){\line(1,0){15}}
\put(115,10){\circle*{3}}
\put(115,10){\line(1,0){30}}
\put(145,10){\circle*{3}}
\put(145,11){\line(1,0){30}}
\put(175,10){\circle*{3}}
\put(145,9){\line(1,0){30}}
\put(175,10){\line(-2,1){10}}
\put(175,10){\line(-2,-1){10}}
\put(80,9){$\ldots$}
\put(-5,0){$\a_1$}
\put(25,0){$\a_2$}
\put(130,0){$\a_{n-1}$}
\put(170,0){$\a_n$}
\end{picture}   }
\end{picture}
\end{figure}
\newline where
\[
  \a_1=e_1-e_2,\ \ldots, \ \a_{n-1}=e_{n-1}-e_n,\ \a_n=2e_n,
\]
and all positive roots are $e_i\pm e_j$, $1\le i<j\le n$, and $2e_i$ ($i=1,\ldots,n$).
The expressions of the positive roots via simple ones are as follows:
\[
   \begin{array}{lll}
       e_i-e_j&=\a_i+\ldots+\a_{j-1}, & \hbox{$1\le i<j\le n$;} \\
       2e_i&=2\a_i+\ldots\hphantom{+\a_{j-1}}+2\a_{n-1}+\a_n, & \hbox{$i=1,\ldots,n-1$;}\\
       2e_n&=\hphantom{2\a_i+\ldots+\a_{j-1}+2\a_{n-1}+\ }\a_n,
   \end{array}
\]
and
\[
e_i+e_j=\left\{
   \begin{array}{ll}
            \a_i+\ldots+\a_{j-1}+2\a_j+\ldots+2\a_{n-1}+\a_n, & \hbox{$i<j\le n-1$;} \\
            \a_i+\ldots+\a_{n-1}+\a_n, & \hbox{$i<j= n$;}.
   \end{array}
        \right.
\]
The highest root is $\theta=2e_1=2\a_1+2\a_2+\ldots+2\a_{n-1}+\a_n$. We will consider here the grading of depth $2$ given by $\a_1$, and the grading of depth $1$ given by $\a_n$. The Lax operator algebra corresponding to the first of them is found in \cite{KSlax}, see also \cite{Sh_DGr}. The algebra corresponding to the second grading is new.

The blocks corresponding to the grading subspaces in the matrix realization of $\g=\spn(2n)$ with respect to the symplectic form $\s=\begin{pmatrix}
          0 & E \\
          -E & 0 \\
    \end{pmatrix}
$ are represented at figure \ref{Cn},a.
\begin{figure}[h]          
\begin{picture}(0,0)
\put(165,92){\text{--}\ $\g_{-2}$}
\put(165,21){\text{--}\ $\g_2$}
\put(165,57){\text{--}\ $\g_0$}
\put(165,74){\text{--}\ $\g_{-1}$}
\put(165,38){\text{--}\ $\g_{1}$}
\put(67,-10){\text{a}}
\end{picture}
  \includegraphics[width=6cm]{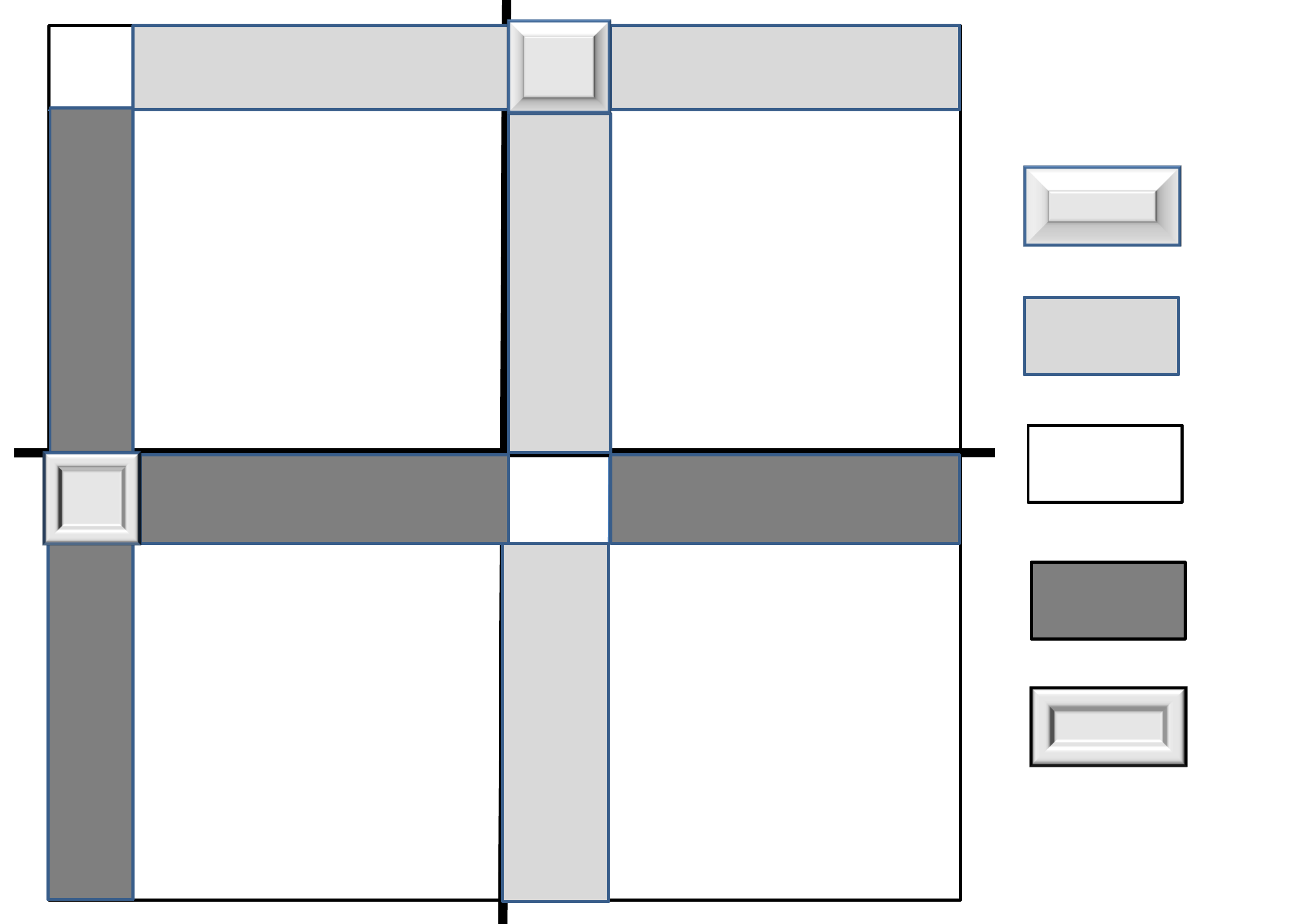}
\begin{picture}(30,0)
\end{picture}
\includegraphics[width=6cm]{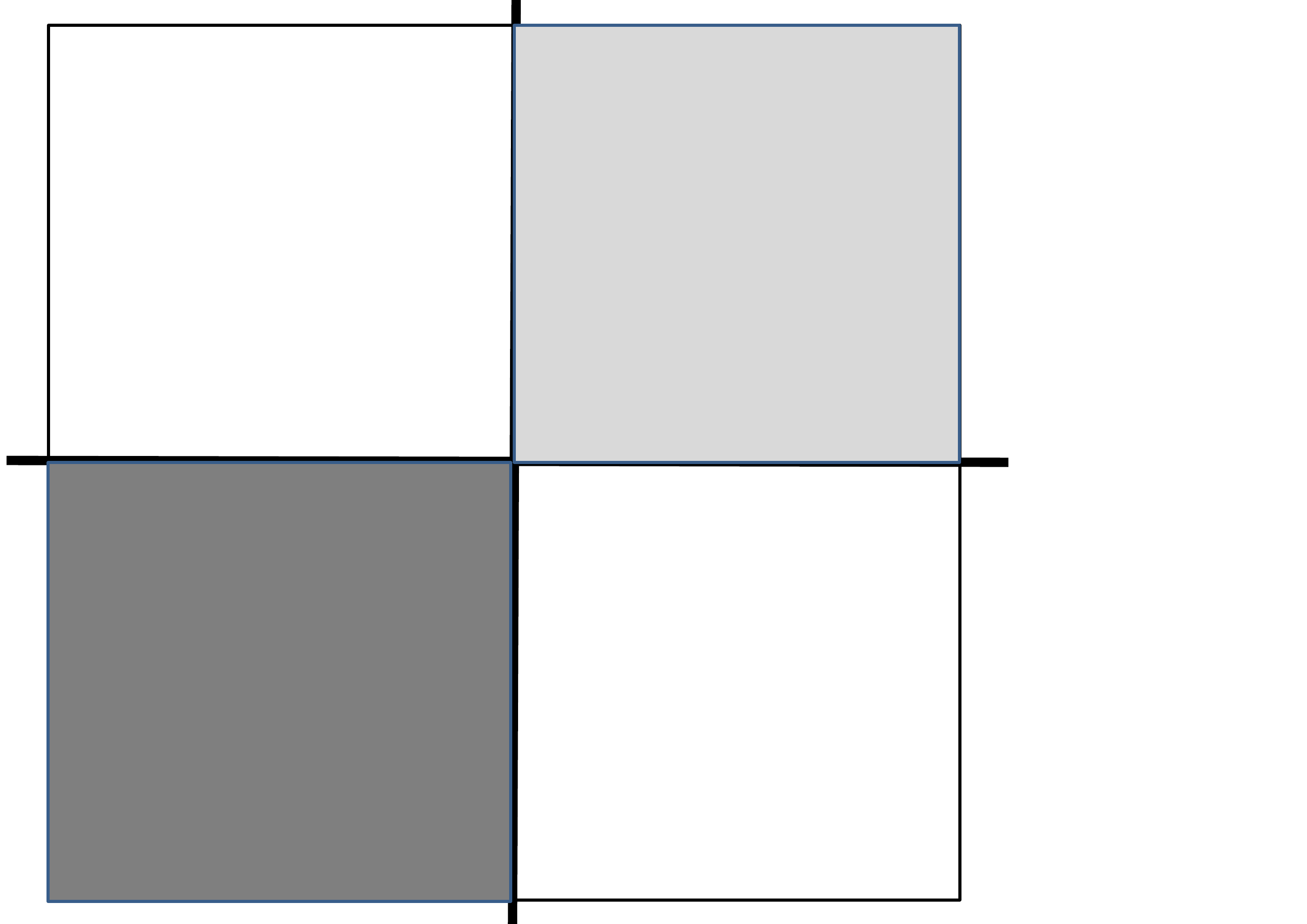}
\begin{picture}(0,0)
\put(-112,-10){\text{b}}
\end{picture}
  \caption{Case of $C_n$}\label{Cn}
\end{figure}
In particular, the one-dimensional subspace $\g_{-2}$ corresponding to the highest root consists of matrices of the form $\nu\a\a^t\s$ where $\a\in\C^{2n}$, $\a^t=(1,0,\ldots,0)$, $\nu\in\C$. The subspace $\g_{-1}$ consists of matrices of the form $(\a\b^t+\b\a^t)\s$ where $\a,\b\in\C^{2n}$, $\a^t=(1,0,\ldots,0)$, and $\b^t\s\a=0$. Observe also that $\a$ is an eigenvector with respect to $\g_0$. Finally, observe that for every $L_1\in\g_1$ we have $\a^t\s L_1\a=0$. Hence we have arrived to the following form of expansions of the element $L$ at $\ga$-points:
\[
   L(z)=\nu\a\a^t\s z^{-2}+(\a\b^t+\b\a^t)\s z^{-1}+L_0+L_1z+\ldots
\]
where $\a$, $\b$, $L_0$, $L_1$ satisfy the just listed relations (see also \cite{KSlax,Sh_DGr}).

The matrix realization of the grading given by a simple root $\a_n$ is represented at the figure \ref{Cn},b. It is completely similar to that for the system $D_n$, with the only distinction that matrices in $\g_{-1}$ are of the form $(\a_1\b_1^t+\b_1\a_1^t)\s+\ldots+(\a_n\b_n^t+\b_n\a_n^t)\s$.
Again, $\a_i$ and $\b_j$ possess the orthogonality relations enabling one independently show by methods of \cite{KSlax} that simple poles at $\ga$-points remain simple under commutator (see \refS{Dn}).


\subsection{The case of $B_n$}\label{S:Bn}
The Dynkin diagram  $B_n$ is as follows:
\begin{figure}[h]
\begin{picture}(100,45)
\put(-40,10){
\begin{picture}(100,30)
\put(0,10){\circle*{3}}
\put(0,10){\line(1,0){30}}
\put(30,10){\circle*{3}}
\put(30,10){\line(1,0){30}}
\put(60,10){\circle*{3}}
\put(60,10){\line(1,0){15}}
\put(100,10){\line(1,0){15}}
\put(115,10){\circle*{3}}
\put(115,10){\line(1,0){30}}
\put(145,10){\circle*{3}}
\put(145,11){\line(1,0){30}}
\put(175,10){\circle*{3}}
\put(145,9){\line(1,0){30}}
\put(145,10){\line(2,1){10}}
\put(145,10){\line(2,-1){10}}
\put(80,9){$\ldots$}
\put(-5,0){$\a_1$}
\put(25,0){$\a_2$}
\put(133,0){$\a_{n-1}$}
\put(170,0){$\a_n$}
\end{picture}   }
\end{picture}
\end{figure}
\newline where
\[
  \a_1=e_1-e_2,\ \ldots, \ \a_{n-1}=e_{n-1}-e_n,\ \a_n=e_n,
\]
and all positive roots are $e_i\pm e_j$, $1\le i<j\le n$, and $e_i$ ($i=1,\ldots,n$).
The expressions of the positive roots via simple ones are as follows:
\[
   \begin{array}{lll}
       e_i-e_j&=\a_i+\ldots+\a_{j-1}, &  \\
       e_i&=\a_i+\ldots\hphantom{+\a_{j-1}}+\a_{n-1}+\a_n, & \hbox{$1\le i<j\le n$;}\\
       e_i+e_j&=\a_i+\ldots+\a_{j-1}+2\a_j+\ldots+2\a_{n-1}+2\a_n,&
   \end{array}
\]

The highest root is $\theta=e_1+e_2=\a_1+2\a_2+\ldots+2\a_n$. We will consider here the grading of depth $1$ given by $\a_1$, and the grading of depth $2$ given by $\a_n$. The Lax operator algebra corresponding to the first of them is found in \cite{KSlax}, see also \cite{Sh_DGr}. The algebra corresponding to the second grading is new.

The blocks corresponding to the grading subspaces in the matrix realization of $\g=\so(2n+1)$ with respect to the quadratic form $\s~=~\begin{pmatrix}
          0 & 0 & E \\
          0 & 1 & 0 \\
          E & 0 & 0 \\
    \end{pmatrix}
$ are represented at figure \ref{Bn},a.
\begin{figure}[h] 
\begin{picture}(0,0)
\put(165,92){\text{--}\ $\g_{-2}$}
\put(165,21){\text{--}\ $\g_2$}
\put(165,57){\text{--}\ $\g_0$}
\put(165,74){\text{--}\ $\g_{-1}$}
\put(165,38){\text{--}\ $\g_{1}$}
\put(67,-10){\text{a}}
\end{picture}
  \includegraphics[width=6cm]{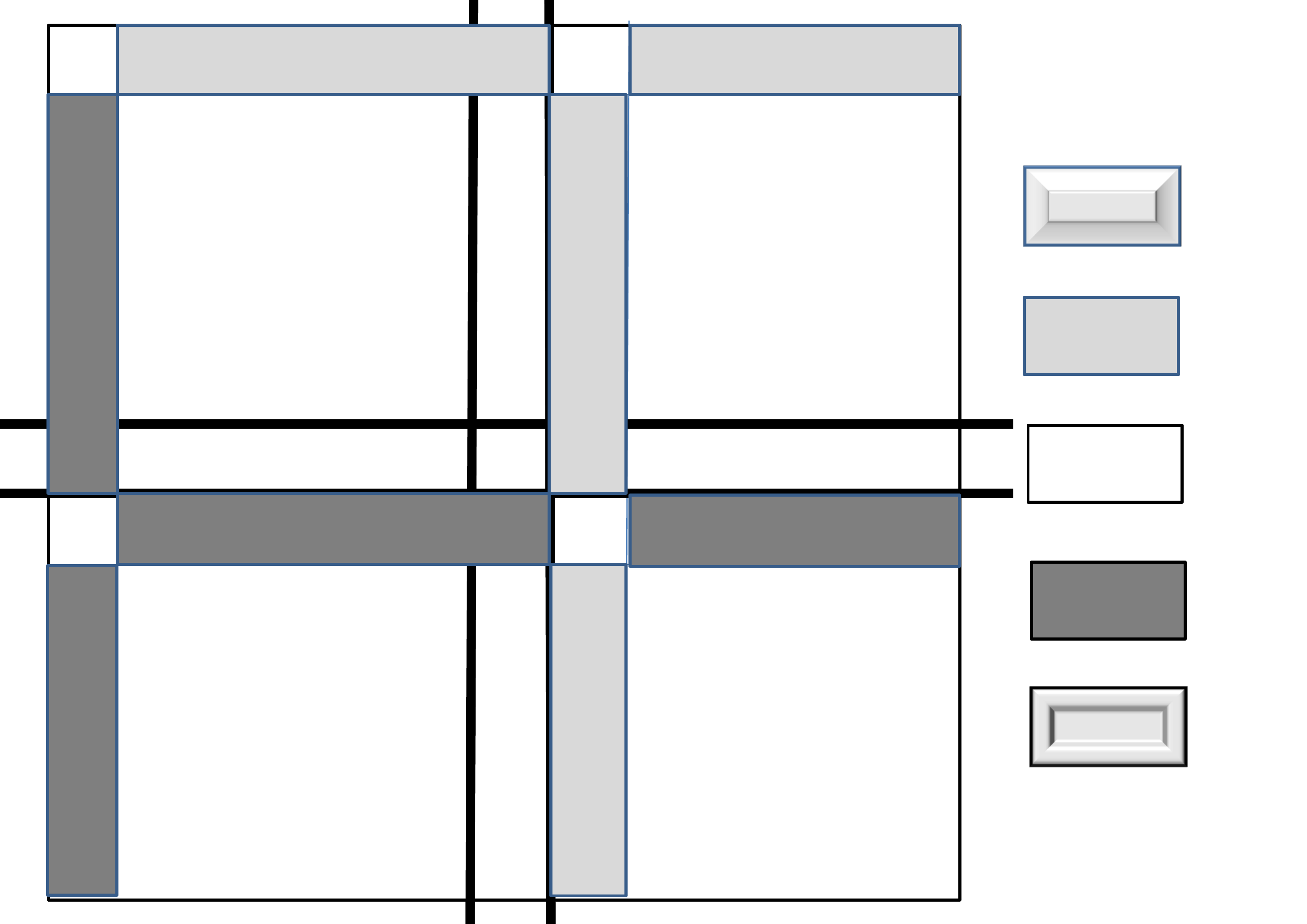}
\begin{picture}(30,0)
\put(-101,109){\small $0$}
\put(-101,48){\small $0$}
\put(-111,57){\small $0$}
\put(-167,48){\small $0$}
\end{picture}
\includegraphics[width=6cm]{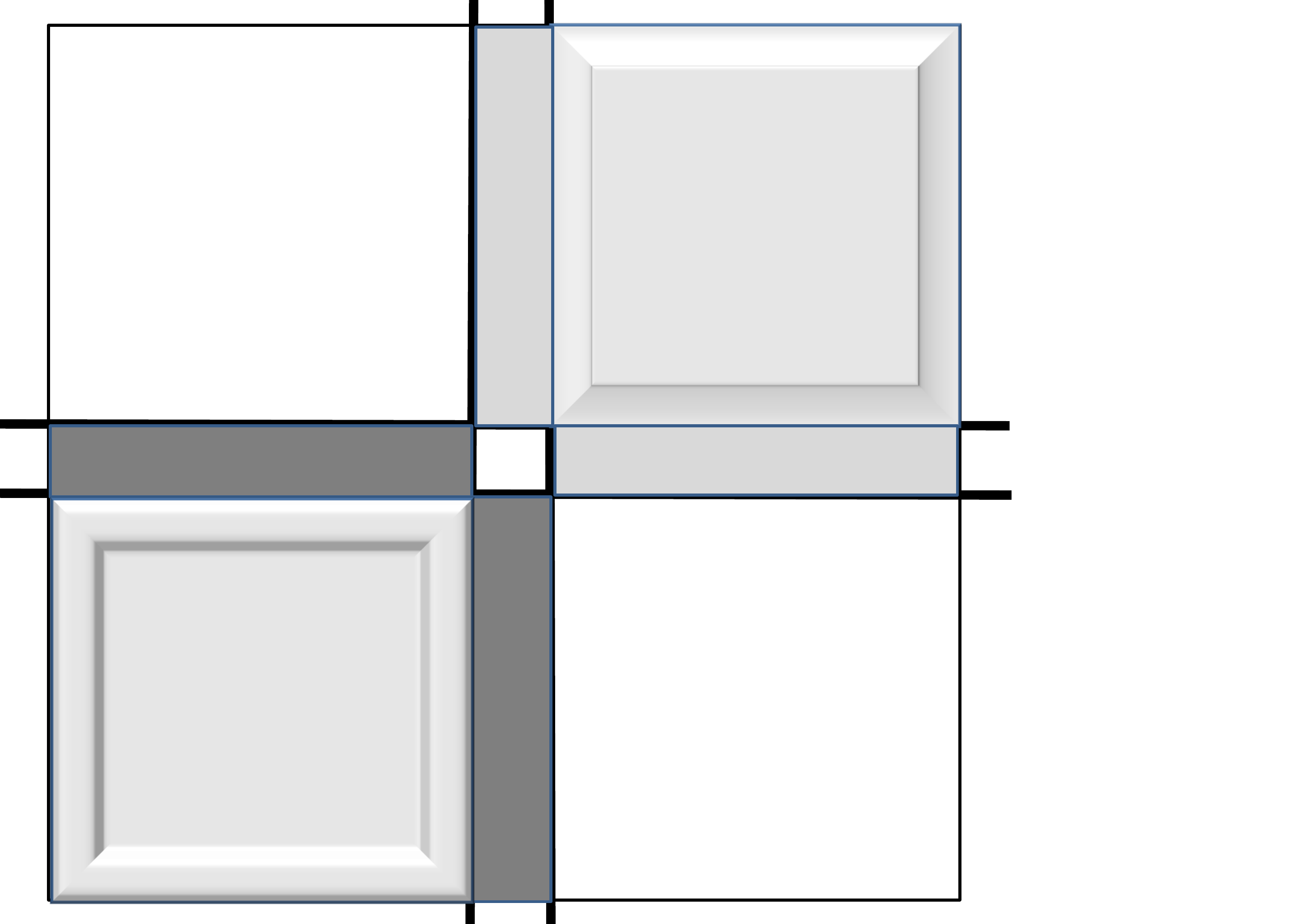}
\begin{picture}(0,0)
\put(-111,57){\small $0$}
\put(-112,-10){\text{b}}
\end{picture}
  \caption{Case of $B_n$}\label{Bn}
\end{figure}

The subspace $\g_{-1}$ consists of matrices of the form $(\a\b^t-\b\a^t)\s$ where $\a,\b\in\C^{2n}$, $\a^t=(1,0,\ldots,0)$, and $\b^t\s\a=0$ (observe that also $\a^t\s\a=0$). Again, $\a$ is an eigenvector with respect to $\g_0$. Hence we have arrived to the following form of expansions of the element $L$ at $\ga$-points:
\[
   L(z)=(\a\b^t-\b\a^t)\s z^{-1}+L_0+\ldots
\]
where $\a$, $\b$, $L_0$ satisfy the just listed relations (see also \cite{KSlax,Sh_DGr}).

The matrix realization of the grading given by a simple root $\a_n$ is represented at the figure \ref{Bn},b. The subspace $\g_{-1}$ is a direct sum of the root subspaces corresponding to the roots $e_i$ ($i=1,\ldots,n$). The subspace $\g_{-2}$ is a direct sum of the root subspaces corresponding to the roots $e_i+e_j$ for all $i,j=1,\ldots,n$. Matrices in $\g_{-1}$ are of the form $\a_0\b_0^t-\b_0\a_0^t$ where $\a_0,\b_0\in\C^{2n+1}$, $(\a_0)_i=(\d_{i,n+1})$, $\b_0=(\b_0^1,\ldots,\b_0^n,0,\ldots,0)$ ($\b_0^j\in\C$ are arbitrary). Matrices in $\g_{-2}$ are of the form $(\a_1\b_1^t-\b_1\a_1^t)\s+\ldots+(\a_n\b_n^t-\b_n\a_n^t)\s$
where $\a_i,\b_i\in\C^{2n+1}$, for $i=1,\ldots,n$, $\a_i$ is given by its coordinates $\a_i^j=\d_i^j$ (where $j=1,\ldots,2n+1$, $\d_i^j$ is the Kronecker symbol), $\b_i=(\b_i^1,\ldots,\b_i^n,0,\ldots,0)$ (where $\b_i^j\in\C$ are arbitrary).

\subsection{The case of $G_2$}\label{S:G2}
The Dynkin diagram  $G_2$ is as follows:
\begin{figure}[h]
\begin{picture}(100,45)
\put(-40,10){
\begin{picture}(100,30)
\put(0,10){\circle*{4}}
\put(0,10){\line(1,0){30}}
\put(0,12){\line(1,0){30}}
\put(0,8){\line(1,0){30}}
\put(30,10){\circle*{4}}
\put(20,10){\line(-2,-1){10}}
\put(20,10){\line(-2,1){10}}

\put(-5,0){$\a_1$}
\put(25,0){$\a_2$}
\end{picture}   }
\end{picture}
\end{figure}
\newline where $\a_1,\a_2\in\C^2$,
\[
  \a_1=(1,0),\  \ \a_2=(-3/2,\sqrt{3}/2),
\]
and all positive roots are vertices of two regular hexagons determined by $\a_1$, $\a_2$ (with a common center at $(0,0)$). The Lie algebra $G_2$ has an exect $7$-dimensional representation by matrices of the form are presented at the figure \ref{G2},b. At the figure, dependent blocks are of the same color (bright gray, dark gray, or wight). By $[x]$ (where $x\in\C^3$, $x^T=(x_1,x_2,x_3)$) we denote a skew-symmetric matrix given by $[x]=\bigl(\begin{smallmatrix} 0&x_3&-x_2\\
-x_3&0&x_1\\
x_2&-x_1&0\end{smallmatrix}\bigr)$. Below we give the full list of positive roots with the relation to the matrix elements of the representation. For every positive root we point out one independent entry of submatrices $a_1$, $a_2$, $A$ giving this root, and the list of all corresponding non-vanishing entries of the full matrix (in the form $(i,j)$).
\[
   \begin{array}{lll}
       \a_1, & (a_1)_1=\sqrt{2}, & (2,1),(1,5),(6,4), \\
       \a_2, &  A_{21}=1,& (3,2),(5,6),\\
       \a_1+\a_2, & (a_1)_2=\sqrt{2},  &  (3,1),(1,6),(5,4),  \\
      2\a_1+\a_2, & (a_2)_3=\sqrt{2},   &  (7,1),(1,4),(2,6),  \\
      3\a_1+\a_2, & A_{13}=1,  &  (2,4),(7,5),  \\
     3\a_1+2\a_2, & A_{23}=1,  &  (3,4),(7,6).
   \end{array}
\]

The highest root is $\theta=3\a_1+2\a_2$. Let us consider first the grading of depth $2$ given by $\a_2$. The blocks corresponding to the grading subspaces in the matrix realization are represented at figure \ref{G2},a.
\begin{figure}[h]  
\begin{picture}(0,0)
\put(165,92){\text{--}\ $\g_{-2}$}
\put(165,21){\text{--}\ $\g_2$}
\put(165,57){\text{--}\ $\g_0$}
\put(165,74){\text{--}\ $\g_{-1}$}
\put(165,38){\text{--}\ $\g_{1}$}
\put(67,-10){\text{a}}
\end{picture}
 \includegraphics[width=6cm]{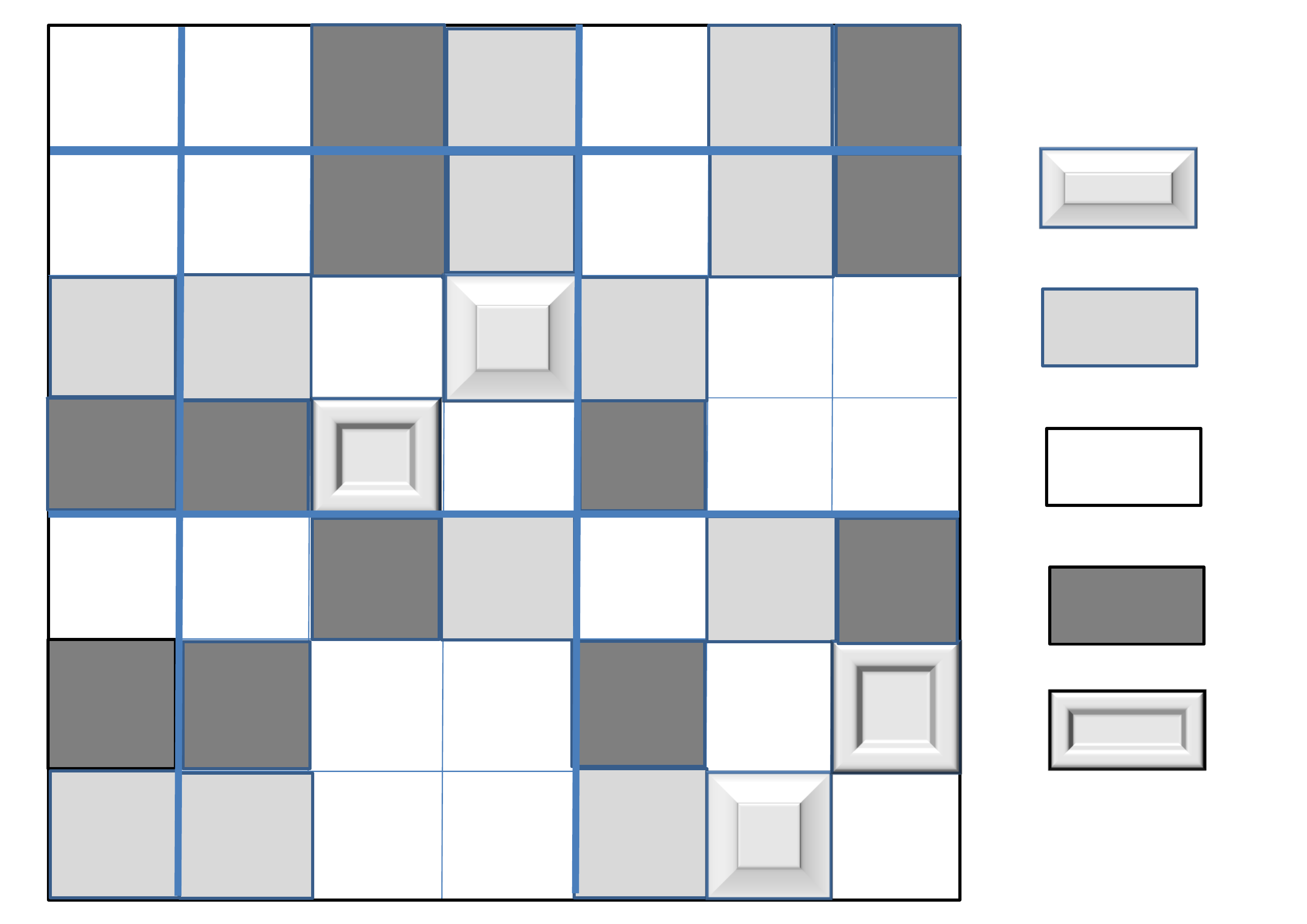}
\begin{picture}(30,0)
\put(-163,105){\small $0$}
\put(-95,90){\small $0$}
\put(-79,75){\small $0$}
\put(-60,57){\small $0$}
\put(-147,42){\small $0$}
\put(-130,25){\small $0$}
\put(-112,8){\small $0$}
\put(-105,-10){\text{a}}
\end{picture}
\includegraphics[width=6cm]{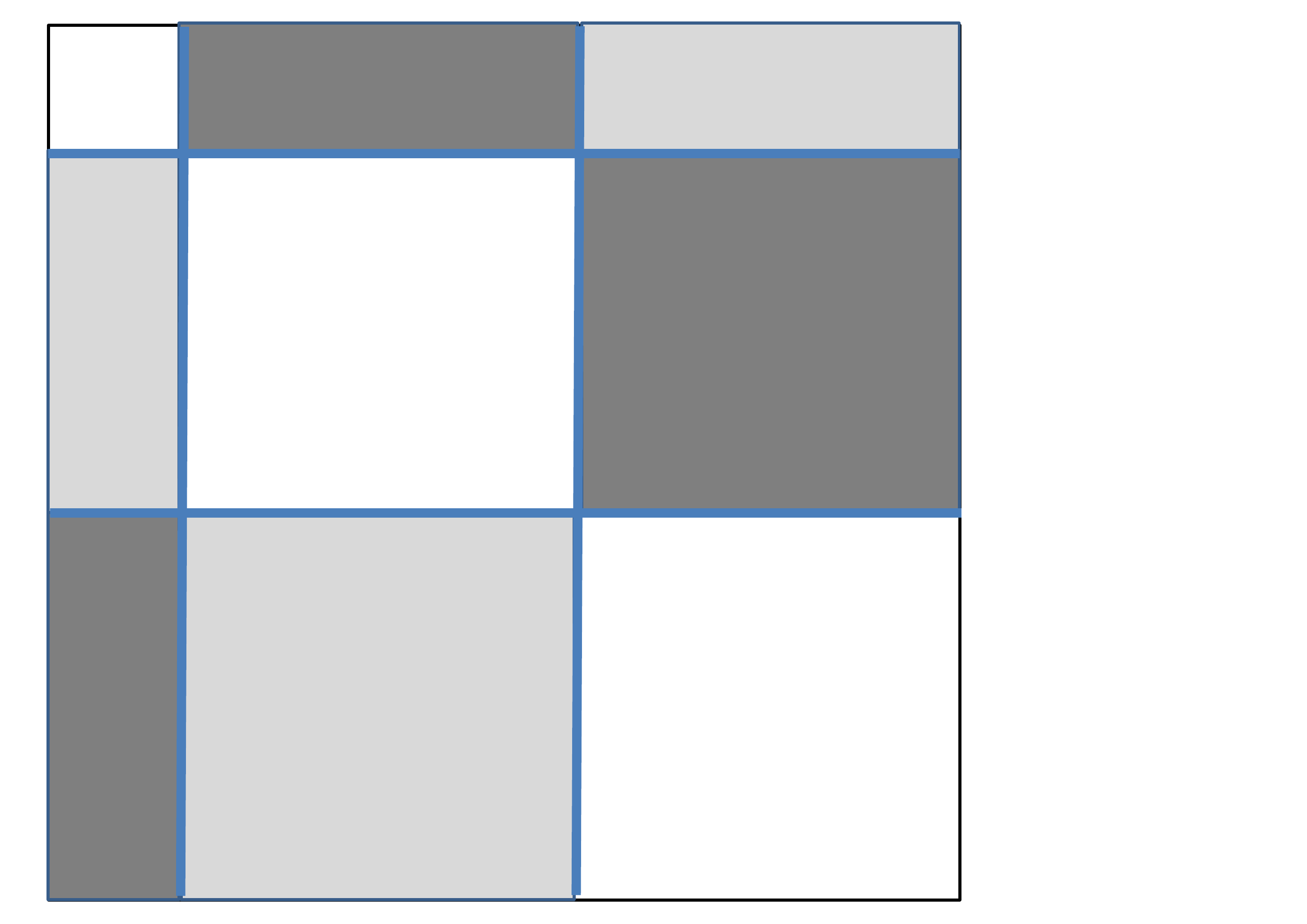}
\begin{picture}(0,0)
\put(-163,105){\small $0$}
\put(-80,105){\small $-a_1^T$}
\put(-165,75){\small $a_1$}
\put(-165,25){\small $a_2$}
\put(-130,105){\small $-a_2^T$}
\put(-130,75){$A$}
\put(-85,25){$-A^T$}
\put(-140,25){$\frac{1}{\sqrt{2}}[a_1]$}
\put(-90,75){$\frac{1}{\sqrt{2}}[a_2]$}
\put(-112,-10){\text{b}}
\end{picture}
\caption{Case of $G_2$: depth $2$}\label{G2}
\end{figure}

It is easy to check that the subspace $\g_{-2}$ consists of matrices of the form
\begin{equation}\label{E:L{-2}}
   L_{-2}=\mu\begin{pmatrix}
       0 & 0 & 0\\
       0 & \tilde\a_1\tilde\a_2^t &  0   \\
       0 &  0  & -\tilde\a_2\tilde\a_1^t    \\
       \end{pmatrix},\quad \mu\in\C
\end{equation}
where $\tilde\a_1=(0,1,0)$, $\tilde\a_2=(0,0,1)$ while
the subspace $\g_{-1}$ consists of matrices of the form
\begin{equation}\label{E:resid}
   L_{-1}=\begin{pmatrix}
       0 & -\sqrt{2}\b_{02}\tilde\a_2^t & -\sqrt{2}\b_{01}\tilde\a_1^t \\
       \sqrt{2}\b_{01}\tilde\a_1 & \tilde\a_1\b_2^t-\b_1\tilde\a_2^t & \b_{02}[\tilde\a_2] \\
       \sqrt{2}\b_{02}\tilde\a_2 & \b_{01}[\tilde\a_1] & \tilde\a_2\b_1^t-\b_2\tilde\a_1^t \\
       \end{pmatrix}.
\end{equation}
where $\b_{01},\b_{02}\in\C$ are arbitrary, $\b_1,\b_2\in \C^3$ satisfy the following orthogonality relations: $\tilde\a_1^T\b_2=0$, $\tilde\a_2^T\b_1=0$. Observe also that $\tilde\a_1^T\tilde\a_2=0$
and if $L_0\in\tilde\g_0$ is given as at the figure \ref{G2},b then
\begin{equation}\label{E:eigen}
\tilde\a_1^Ta_2=0, \quad \tilde\a_2^Ta_1=0, \quad A\tilde\alpha_1=\varkappa_1\tilde\alpha_1,  \quad -A^T\tilde\alpha_2=\varkappa_2\tilde\alpha_2
\end{equation}
where $\varkappa_1,\varkappa_2\in\C$.

As a result we have obtained the Lax operator algebra recently found in \cite{Sh_G2}. Hence we may claim that this Lax operator algebra corresponds to the depth $2$ grading of $G_2$ given by the simple root $\a_2$.

Besides, the Lie algebra $G_2$ has a grading of depth $3$ given by the simple root $\a_1$. The matrix realization of this grading is given at the figure \ref{G2c}.
\begin{figure}[h] 
\includegraphics[width=6cm]{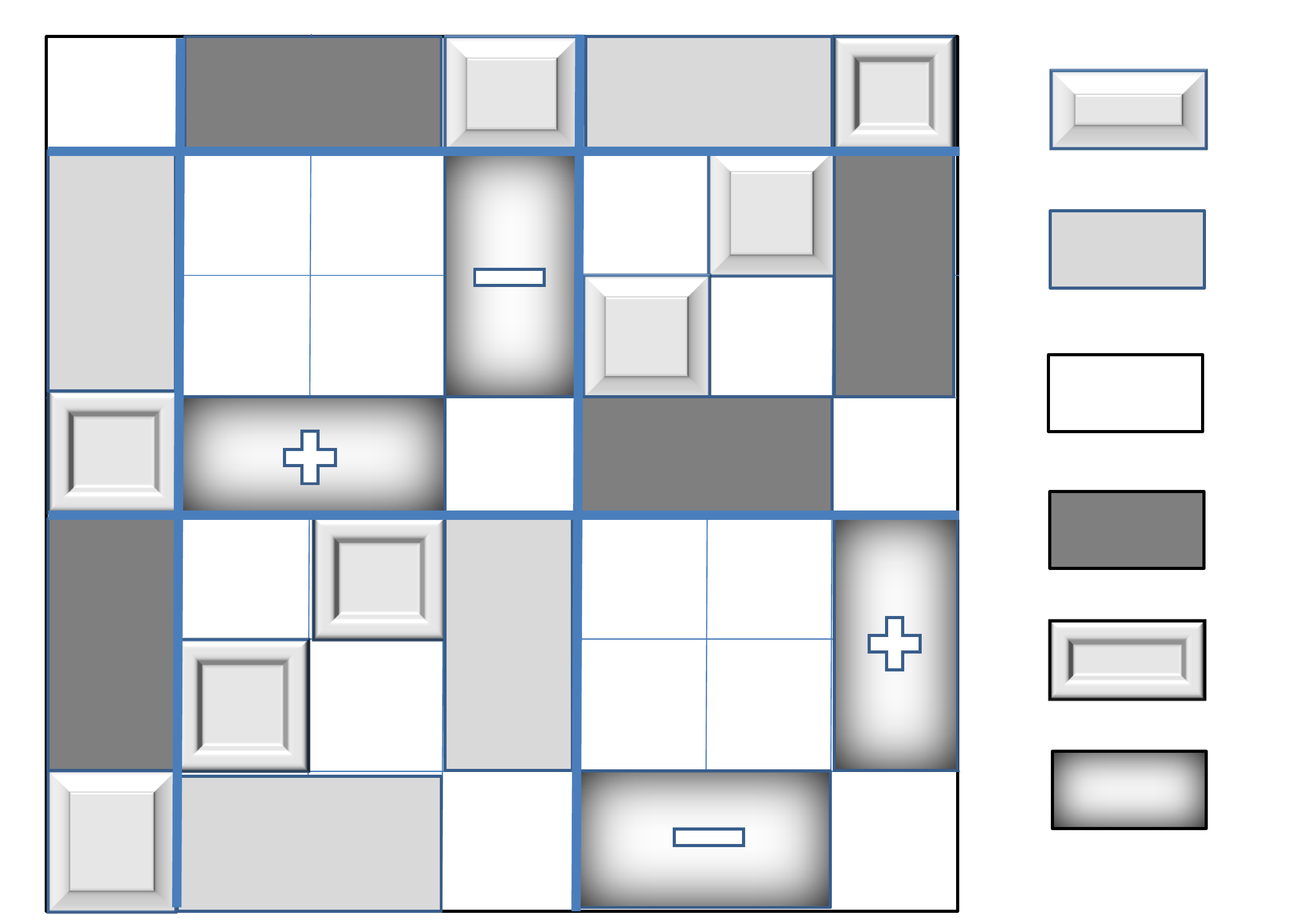}
\begin{picture}(30,0)
\put(-163,105){\small $0$}
\put(-95,90){\small $0$}
\put(-79,75){\small $0$}
\put(-60,57){\small $0$}
\put(-147,42){\small $0$}
\put(-130,25){\small $0$}
\put(-112,8){\small $0$}
\put(-10,102){\text{--}\ $\g_{-2}$}
\put(-10,85){\text{--}\ $\g_{-1}$}
\put(-10,68){\text{--}\ $\g_0$}
\put(-10,51){\text{--}\ $\g_1$}
\put(-10,31){\text{--}\ $\g_2$}
\put(-10,14){\text{--}\ $\g_{\pm 3}$}
\end{picture}
\caption{Case of $G_2$: depth $3$}\label{G2c}
\end{figure}


}

\end{document}